\newtheorem{theorem}{Theorem}[section]
\newtheorem{corollary}[theorem]{Corollary}
\newtheorem{lemma}[theorem]{Lemma}
\newtheorem{proposition}[theorem]{Proposition}
\theoremstyle{definition}
\newtheorem{definition}[theorem]{Definition}
\newtheorem{remark}[theorem]{Remark}
\numberwithin{equation}{section}
\DeclareMathOperator{\Aut}{Aut}
\DeclareMathOperator{\Ext}{Ext}
\DeclareMathOperator{\h}{H}
\DeclareMathOperator{\Hom}{Hom}
\DeclareMathOperator{\sym}{Sym}
\DeclareMathOperator{\iso}{Iso}
\newcommand{\bb}[1]{\mathbb{#1}}
\newcommand{\bbk}{\Bbbk}
\newcommand{\inj}{\hookrightarrow}
\newcommand{\lie}[1]{\mathfrak{#1}}
\begin{document}

\title{Automorphisms of ideals of polynomial rings}

\author{Tiago Macedo}
\address{Department of Mathematics and Statistics\\
    University of Ottawa\\
    Ottawa, ON K1N 6N5\\
    \  and \
    Department of Science and Technology\\
    Federal University of S\~ao Paulo\\
    S\~ao Jos\'e dos Campos, S\~ao Paulo, Brazil, 12.247-014}
\thanks{Research of the first author was supported by CNPq grant 232462/2014-3}
\email{tmacedo@unifesp.br}

\author{Thiago Castilho de Mello}
\address{Department of Science and Technology\\
         Federal University of S\~ao Paulo\\
         S\~ao Jos\'e dos Campos, S\~ao Paulo, Brazil, 12.247-014\\}
\thanks{Research of the second author was supported Fapesp grants 2014/10352-4 and 2014/09310-5, and CNPq grants 461820/2014-5 and 462315/2014-2}
\email{tcmello@unifesp.br}

\begin{abstract}
Let $R$ be a commutative integral domain with unit, $f$ be a nonconstant monic polynomial in $R[t]$, and $I_f \subset R[t]$ be the ideal generated by $f$.  In this paper we study the group of $R$-algebra automorphisms of the $R$-algebra without unit $I_f$.  We show that, if $f$ has only one root (possibly with multiplicity), then $\Aut (I_f) \cong R^\times$.  We also show that, under certain mild hypothesis, if $f$ has at least two different roots in the algebraic closure of the quotient field of $R$, then $\Aut(I_f)$ is a cyclic group and its order can be completely determined by analyzing the roots of $f$.
\end{abstract}

\date{\today}
\subjclass[2010]{Primary 08A35, 13A15, 16W20}

\maketitle

\section{Introduction}

\subsection{}

Automorphisms of rings and algebras has been a subject of extensive research since the last century, especially automorphisms of the polynomial rings in $n$ variables over a field, $\bbk$.  The most simple type of automorphisms of the algebra $\bbk [x_1, \dotsc, x_n]$ are the so-called elementary automorphisms.  They are defined to be the only homomorphisms of $\bbk [x_1, \dotsc, x_n]$ such that $x_i \mapsto \alpha x_i + f$ for some $i \in \{ 1, \dotsc, n\}$, $\alpha \in \bbk$, $f \in \bbk [x_1,\dotsc, x_{i-1}, x_{i+1}, \dotsc, x_n]$, and $x_j \mapsto x_j$ for all $j \neq i$.  The subgroup of the group of automorphisms of $\bbk [x_1, \dotsc, x_n]$, generated by such automorphisms is called the tame subgroup, and its elements are called tame automorphisms of $\bbk[x_1, \dotsc, x_n]$.  When $\bbk$ is a field of characteristic zero, it was proved by Jung and van der Kulk \cite{Jung42, Kulk53} that every automorphism of $\bbk[x_1, x_2]$ is tame.  Similar results were proved to be true for other classes of free algebras, such as the free associative algebra in two variables \cite{Limanov70, Czerniakiewicz71, Czerniakiewicz72} and finitely generated free Lie algebras \cite{Cohn68}.  It was thus conjectured that every automorphism of $\bbk[x_1, \dotsc, x_n]$ was tame for $n > 2$.  In beginning of the present century Shestakov and Umirbaev \cite{ShestakovUmirbaev, Umirbaev} have shown that this conjecture is false by proving that so-called Nagata automorphism of $\bbk [x_1, x_2, x_3]$ and the so-called Anick automorphism of the free associative algebra in three variables are not tame.

Another important problem related to automorphisms of polynomial rings, is the so-called Jacobian Conjecture (see for example \cite{vandenEssen}).  Given a polynomial map $F \colon \bbk^n \to \bbk^n$, with $F = (F_1, \dotsc, F_n)$ for some $F_i \in \bbk[x_1, \dotsc, x_n]$, denote by $J_F$ its Jacobian matrix.  The Jacobian Conjecture states that, if $\det(J_F) \in \bbk^\times$, then $F$ is an invertible polynomial map; that is, there exists a polynomial map $G \colon \bbk^n \to \bbk^n$, $G = (G_1, \dotsc, G_n)$, such that $x_i = G_i(F_1, \dotsc, F_n)$, for all $i = 1, \dotsc, n$.  A natural approach to this problem is through automorphisms of polynomial rings, as there is a one-to-one correspondence between invertible polynomial maps and $\bbk$-algebra automorphisms of $\bbk[x_1, \dotsc, x_n]$ (given by precomposition).  Several reductions of the Jacobian Conjecture as well as some particular cases have been proven.  Nevertheless, its complete solution is still unknown.

Although automorphisms of rings and algebras have been extensively studied and the group of automorphisms of $\bbk[x]$ is well-known, the authors found no references in the literature regarding the group of automorphisms of ideals of $\bbk [x]$ when considered as $\bbk$-algebras without unit.  The description of such groups may have important applications.  One particular application, and the one that motivated the current paper, is the following.  Let $\lie g$ be a finite-dimensional simple Lie algebra over $\bb C$, and let $\lie g[t]$ denote the current algebra associated to $\lie g$; that is, the Lie algebra with underlying vector space $\lie g \otimes \bb C[t]$ and Lie bracket linearly extending
\[
[x \otimes t^n, y \otimes t^m] = [x,y] \otimes t^{m+n}
\]
for all $x,y \in \lie g$ and $n,m \ge 0$.  Representation theory of the current algebra $\lie g[t]$ is closely related to the representation theory of the affine Kac-Moody Lie algebra $\widehat{\lie g}$ associated to $\lie g$, as $\lie g[t]$ is a parabolic subalgebra of $\widehat{\lie g}$.

For any ideal $I \subset \bb C[t]$, the subspace $\lie g \otimes I$ is an ideal of the Lie algebra $\lie g[t]$.  These ideals can be used to describe extensions between finite-dimensional irreducible $\lie g[t]$-modules.  Namely, Boe, Drupieski, Nakano and the first author proved in \cite{BDMN} that
\[
\Ext^2_{\lie g[t]} (V, V')
\cong \Hom_{\lie g \otimes A/I} (V, \h^2 (\lie g \otimes I, \bb C) \otimes V')
\oplus \h^2 (\lie g \otimes A, \bb C)^{\oplus \delta_{V, V'}},
\]
where $I$ is an ideal of $\bb C[t]$ such that $(\lie g \otimes I) (V^* \otimes V) = 0$, $\delta_{V, V'} = 1$ if $V \cong V'$, and $\delta_{V, V'} = 0$ otherwise.  Notice that this description of $\Ext^2_{\lie g[t]} (V, V')$ depends on the description of $\h^2 (\lie g \otimes I, \bb C)$ as a $\lie g \otimes A/I$-module, which is still unknown.  A technique that could help describe $\h^2 (\lie g \otimes I, \bb C)$ is to use $\Aut(I)$ in order to decompose the cocomplex used for computing $\h^2 (\lie g \otimes I, \bb C)$ into subcocomplexes whose cohomologies may be easier to compute.  A similar approach was used in \cite{vigre7} to compute $\h^2 \left( G(\bb F_q), L \right)$ for a simple, simply-connected algebraic group $G$ defined over $\bb F_p$, $q = p^r$, for a positive prime $p$, $r > 0$, $G(\bb F_q)$ the subgroup of $\bb F_q$-rational points of $G$, and a rational irreducible $G$-module $L$.  The results of the current paper show that this technique in not available for every $\lie g \otimes I$.

\subsection{Organization of the paper}

Section~\ref{S:results} contains the main results of this paper.  In Section~\ref{ss:exts} we prove a few general results, which will be repeatedly used in the forthcoming sections, regarding extensions of certain homomorphisms between ideals of the form $I_f$ to endomorphisms of $R[t]$.  In Section~\ref{ss:perms} we describe isomorphisms between ideals of the form $I_f$ in terms of certain bijections between the corresponding sets $Z(f)$.  In particular, automorphisms of $I_f$ are described in terms of certain permutations of $Z(f)$.  In Section~\ref{ss:mains} we prove our main results.  We first describe the automorphism groups of ideals generated by nonconstant monic polynomials with only one root (possibly with multiplicity) in $\bb K$ (see Proposition~\ref{prop:main0}).  Then we proceed to completely describe the automorphism group of $I_f$ for nonconstant monic polynomials $f \in R[t]$ with at least two distinct roots (see Theorems~\ref{thm:main} and \ref{thm:main2}).  We finish the paper by suggesting a few interesting problems in Section~\ref{S:open.problems}.

\subsection{Notation}

Let $R$ be a commutative integral domain with unit, denote by $\bbk$ its quotient field and by $\bb K$ the algebraic closure of $\bbk$.  Also, denote by $R^\times$, $\bbk^\times$ and $\bb K^\times$ the multiplicative groups consisting of invertible elements in $R$, $\bbk$ and $\bb K$ respectively.

Given $f \in R[t]$, we say that $f$ is monic if its leading coefficient is invertible in $R$.  We will denote by $I_f$ the ideal of $R[t]$ generated by $f$ (viewed as an associative commutative $R$-algebra without unit), by $A_f$ the associative commutative $R$-algebra with unit obtained from $I_f$ by formal adjunction of unity; that is, as an $R$-module $A_f = R \oplus I_f$, and we endow it with the product defined by $(r_1 + g_1) (r_2+ g_2) = r_1r_2 + r_1g_2 + r_2g_1 + g_1g_2$ for all $r_1, r_2 \in R$ and $g_1, g_2 \in I_f$.

Given any polynomial $f \in \bb K[t]$, let $Z(f)$ denote its set of roots in $\bb K$, and let $\deg(f)$ denote its degree.  Given a set $S$, let $\sym(S)$ denote the group of permutations of $S$.

\section{Results} \label{S:results}

\subsection{Extension of isomorphisms and automorphisms} \label{ss:exts}

We begin by proving a few general results related to extensions of homomorphisms between ideals of the form $I_f$ to endomorphisms of $R[t]$.  The main results of this subsection, Propositions~\ref{prop:HW} and \ref{prop:lambda}, will be repeatedly used in the coming sections.  We begin by proving that one can extend any injective algebra homomorphism between ideals of the form $I_f$ to an injective endomorphism of $R[t]$.

\begin{lemma} \label{lem:HW.inj}
Let $f$ and $g$ be polynomials in $R[t]$.  If $f$ is nonconstant and monic, then every injective $R$-algebra homomorphism $\varphi \colon I_f \to I_g$ extends uniquely to an injective $R$-algebra homomorphism $\phi \colon R[t] \to R[t]$.
\end{lemma}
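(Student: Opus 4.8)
The strategy is to pin down where the generators $f$ and $f^2$ (say, more precisely $tf$ or just $f$ together with $f\cdot f$) must go, use that to build a candidate endomorphism of $R[t]$, and then check it restricts to $\varphi$ and is injective. The key observation is that $I_f$ is generated, as an $R$-algebra without unit, by the two elements $f$ and $tf$: indeed any element of $I_f$ is $hf$ for some $h \in R[t]$, and writing $h$ in terms of $1$ and $t$ shows $hf$ is an $R$-polynomial combination of $f, tf$ and products thereof. (Since $f$ is monic and nonconstant, $f$ and $tf$ are non-zero and $I_f$ is infinite-dimensional, so this is a genuine constraint on $\varphi$.) So $\varphi$ is completely determined by $\varphi(f)$ and $\varphi(tf)$, and conversely any $R$-algebra homomorphism $R[t] \to R[t]$ extending $\varphi$ is determined by the image of $t$.

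First I would set $a := \varphi(f) \in I_g \subset R[t]$ and $b := \varphi(tf) \in I_g$. From the relation $(tf)\cdot(tf) = (t^2)\cdot f^2$ inside $I_f$, and more usefully from $f \cdot (tf) = t \cdot f^2$ together with $f \cdot f = f^2$, one gets the multiplicative identities $b \cdot a = $ (the image of $tf^2$) and so on; the cleanest route is to observe $(tf)^2 f = (f^2)(tf)(tf)/\dots$— rather, I would use that in the domain $R[t]$ we have $(tf) \cdot (tf) \cdot f = (tf)\cdot(tf)\cdot f$ and exploit that $\varphi$ is a ring homomorphism on $I_f$ to deduce $b^2 a = (ab)^2/a$ type relations forcing $a \mid b^2$ and in fact $b^2 = a \cdot c$ with $c \in R[t]$. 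Then I would define $\phi \colon R[t] \to R[t]$ by $\phi(t) = b/a$ — the point being to show $b/a$ actually lies in $R[t]$ (not merely in $\bbk(t)$): since $a$ is monic up to a unit (as $\varphi$ sends the monic generator $f$ somewhere, and injectivity plus the algebra structure pins down its leading behavior), division works out in $R[t]$. Set $\phi(t) = b/a =: p(t)$; this determines $\phi$ as the unique $R$-algebra endomorphism of $R[t]$ with $t \mapsto p$.

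Next I would verify $\phi|_{I_f} = \varphi$. It suffices to check agreement on the generators $f$ and $tf$: $\phi(f) = f(p)$, and one must show $f(p) = a$; similarly $\phi(tf) = p \cdot f(p) = (b/a)\cdot a = b$, which is automatic once $f(p) = a$ is known. The identity $f(p) = a$ should follow from the relation $\varphi(f)^{n} = \varphi(f^n)$ for all $n$ combined with the way $f^n$ generates, or more directly by observing that $\varphi(f \cdot g) = \varphi(f)\varphi(g)$ for all $g \in R[t]$ with $fg \in I_f$ forces $\varphi$ to be "multiplication-compatible" in a way that identifies $a$ with a polynomial in $p$; writing $f(t) = \sum c_i t^i$ and using $\varphi(t^i f) = $ (polynomial in $a,b$) one reconstructs $f(p) = a$. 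Uniqueness of the extension is immediate from the generation remark above. Finally, injectivity of $\phi$: since $R$ is an integral domain and $\phi$ is an $R$-algebra endomorphism of $R[t]$ with $\phi(t) = p$, it is injective iff $p$ is non-constant; and $p$ is non-constant because $\phi|_{I_f} = \varphi$ is injective on the infinite-dimensional algebra $I_f$, which would be impossible if $\phi$ factored through $R[\text{constant}] = R$.

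**Main obstacle.** The delicate point is showing that the quotient $b/a = \varphi(tf)/\varphi(f)$ genuinely lies in $R[t]$, i.e. that $a$ divides $b$ in $R[t]$ and not just in $\bbk[t]$ or the fraction field. This requires extracting enough algebraic relations from "$\varphi$ is an injective $R$-algebra homomorphism" — essentially that $a$ must be, up to a unit of $R$, a monic polynomial and that $b$ is a polynomial multiple of it — and it is here that monicity of $f$ and $R$ being an integral domain with identity are both used. Everything after that (agreement on generators, uniqueness, injectivity) is routine bookkeeping.
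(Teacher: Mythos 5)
Your overall architecture matches the paper's: the extension must send $t\mapsto\varphi(tf)\varphi(f)^{-1}$, and the whole lemma reduces to showing this fraction lies in $R[t]$. But two things go wrong. First, the auxiliary claim that $I_f$ is generated as an $R$-algebra without unit by $f$ and $tf$ is false once $\deg(f)\ge 3$: products of $f$ and $tf$ only produce $R$-spans of monomials $t^if^j$ with $i\le j$, so for example $t^2f\notin R\langle f,tf\rangle$ when $f=t^3$ (you get $t^3,t^4,t^6,t^7,\dots$ but never $t^5$). The paper itself records the correct generating set $\{f,tf,\dots,t^{n-1}f\}$. This claim is what you lean on both for uniqueness and for ``check agreement on generators,'' so those steps need repair; the repair is available (from $\varphi(t^if)\varphi(f)^{i-1}=\varphi(tf)^i$ one sees that $\varphi$ is determined by $\varphi(f)$ and $\varphi(tf)$ inside the fraction field, since $\varphi(f)\neq0$), but as written the justification is wrong.

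Second, and more seriously, the step you yourself flag as the main obstacle --- that $p:=\varphi(tf)/\varphi(f)$ lies in $R[t]$ and not merely in $\bbk(t)$ --- is not actually proved. The assertions that $\varphi(f)$ is ``monic up to a unit'' (unjustified, and not needed) and that relations like $\varphi(f)\mid\varphi(tf)^2$ make ``division work out'' do not close the gap: $a\mid b^2$ does not give $a\mid b$, and $R[t]$ need not even be a UFD. The paper's resolution is an integrality argument that your sketch never reaches: pass to $A_f=R\oplus I_f$ and then to the quotient field $\bbk(t)$, write $p=p(t)/q(t)$ with $p,q$ coprime in $R[t]$, and use that $f(p/q)=\varphi(f)\in R[t]$ together with the monicity of $f$; clearing denominators yields $-p(t)^n=q(t)\cdot(\cdots)$, so $q\mid p^n$, forcing $q\in R^\times$. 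This is the one place where the hypothesis that $f$ is monic does real work, and it is exactly the step missing from your proposal. Your closing injectivity argument ($p$ nonconstant because $\varphi$ is injective on an $R$-module of infinite rank) is fine and is a legitimate alternative to the paper's route through the field $\bbk(t)$.
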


\begin{proof}
We will divide the proof in three steps.  We begin by showing that every injective $R$-algebra homomorphism $\varphi \colon I_f \to I_g$ extends uniquely to an injective $R$-algebra homomorphism $\tilde\varphi \colon A_f \to A_g$.  In fact, every such $\tilde\varphi$ must satisfy $\tilde\varphi (\lambda + h) = \lambda + \varphi (h)$ for all $\lambda \in R$ and $h \in I_f$.  It is easy to verify that $\tilde\varphi$ defined in this way extends $\varphi$ and is an injective $R$-algebra homomorphism.

Now, we will show that every injective $R$-algebra homomorphism $\tilde\varphi \colon A_f \to A_g$ extends uniquely to an injective $R$-algebra homomorphism $\tilde\phi \colon \bbk (t)\to \bbk(t)$.  First notice that $A_f$ and $A_g$ are domains, and $\bbk (t)$ is their quotient field.  Since $\tilde\varphi$ is injective, by composing it with the inclusion of $A_g$ into $\bbk(t)$, we obtain an injective $R$-algebra homomorphism $\tilde\varphi \colon A_f \to \bbk(t)$.  By the universal property of quotient rings (see, for instance, \cite[Proposition 3.1]{atiyah}), there exists a unique ring homomorphism $\tilde\phi \colon \bbk(t) \to \bbk(t)$ extending $\tilde\varphi$.  Since $\tilde\varphi$ is an $R$-algebra homomorphism, $\tilde\phi$ will also be an $R$-algebra homomorphism.

Moreover, since every $\bbk$-algebra endomorphism of $\bbk (t)$  is uniquely determined by the image of $t$, and $\bbk$ is the quotient field of $R$, every $R$-algebra endomorphism of $\bbk (t)$ is uniquely determined by the image of $t$.  Thus $\tilde\phi$ is uniquely define by setting $\tilde\phi(t) = \tilde\varphi (tf) \tilde\varphi(f)^{-1} = \varphi(tf) \varphi(f)^{-1}$.  Finally, notice that $\tilde\phi$ is injective because it is nonzero and $\bbk(t)$ is a field.

Our last step will be to show that the restriction of $\tilde\phi$ to $R[t]$ induces an injective $R$-algebra homomorphism $\phi \colon R[t] \to R[t]$.  First notice that, if we prove that $\tilde\phi (t) \in R[t]$, then $\tilde\phi(h(t)) = h(\tilde\phi(t)) \in R[t]$ for all $h(t) \in R[t]$.  Moreover, since $\tilde\phi$ is an injective $R$-algebra homomorphism that extends $\varphi$, its restriction to $R[t]$ will induce an injective $R$-algebra homomorphism $\phi \colon R[t] \to R[t]$ that extends $\varphi$.

In order to prove that $\tilde\phi (t) \in R[t]$, let $p(t)$ and $q(t)$ be coprime and nonzero polynomials in $R[t]$ such that $\tilde\phi(t) = p(t)q(t)^{-1}$, and let $c_0, c_1, \dotsc, c_{n-1}$ be elements in $R$ such that $f(t) = t^n + c_{n-1}t^{n-1} + \dotsb + c_1 t + c_0$.  Since $\tilde\phi(f(t)) = \tilde\varphi(f(t)) = \varphi(f(t))$ and $\tilde\phi (f(t)) = f(\tilde\phi(t))$, there exists $h(t) \in R[t]$ such that
\[
h(t)= p(t)^n q(t)^{-n} + c_{n-1} p(t)^{n-1} q(t)^{1-n} + \dotsb + c_1 p(t) q(t)^{-1} + c_0 .
\]
Multiplying both sides of this equation by $q(t)^n$ and rearranging the terms, we obtain the following equation in $R[t]$:
\[
-p(t)^n = q(t) \left( (c_0-h(t))q(t)^{n-1} + c_1p(t)q(t)^{n-2} + \dotsb + c_{n-1} p(t)^{n-1} \right).
\]
This equation shows that $q(t)$ divides $p(t)^n$. Since $p(t)$ and $q(t)$ are assumed to be coprime, it follows that $q(t)$ must divide $1$. Thus $q(t) \in R^\times$, proving that $\tilde{\phi}(t)\in R [t]$ and finishing the proof.
\end{proof}

Notice that the extension of the identity map on $I_f$ constructed in the proof of Lemma~\ref{lem:HW.inj} is the identity map on $R[t]$, and that composition of extensions constructed in the proof of Lemma~\ref{lem:HW.inj} is the same as the extension of compositions.  We record this result, as it will be used in the proof of Corollary~\ref{cor:HW} and Lemma~\ref{lem:aut=perm}.

\begin{lemma} \label{lem:comp.exts}
Let $f$, $g$ and $h$ be polynomials in $R[t]$, let $\varphi \colon I_f \to I_g$ and $\psi \colon I_g \to I_h$ be injective homomorphisms of $R$-algebras, and denote $(\psi \circ \varphi)$ by $\xi$.  If $f=g$ is nonconstant and monic and $\varphi = {\rm id}_{I_f}$, then $\varphi$ extends to $\phi = {\rm id}_{R[t]}$.  If $f$ and $g$ are nonconstant and monic, then $\varphi$, $\psi$ and $\xi$ extend to injective $R$-algebra endomorphisms $\tilde\varphi$, $\tilde\psi$ and $\tilde\xi$ of $R[t]$ satisfying $\tilde\psi \circ \tilde\varphi = \tilde\xi$.
\end{lemma}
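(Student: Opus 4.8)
The plan is to deduce both assertions from the \emph{uniqueness} clause of Lemma~\ref{lem:HW.inj}, rather than unwinding the three-step construction in its proof. Throughout, the key observation to keep in mind is that Lemma~\ref{lem:HW.inj} not only produces an extension but asserts it is the only injective $R$-algebra endomorphism of $R[t]$ restricting to the given map.

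For the first assertion, I would simply note that ${\rm id}_{R[t]}$ is an injective $R$-algebra endomorphism of $R[t]$ whose restriction to $I_f$ equals ${\rm id}_{I_f} = \varphi$. Since $f$ is nonconstant and monic, Lemma~\ref{lem:HW.inj} (with $g = f$) guarantees that $\varphi$ has a unique extension to such an endomorphism; hence that extension $\phi$ must be ${\rm id}_{R[t]}$.

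For the second assertion, I would first record that $\xi = \psi \circ \varphi$ is an injective $R$-algebra homomorphism and that $\xi(I_f) \subseteq \psi(I_g) \subseteq I_h$, so $\xi \colon I_f \to I_h$ is a map of the kind Lemma~\ref{lem:HW.inj} applies to; since $f$, $g$ are nonconstant and monic, that lemma yields unique injective $R$-algebra endomorphisms $\tilde\varphi$, $\tilde\psi$, $\tilde\xi$ of $R[t]$ extending $\varphi$, $\psi$, $\xi$ respectively. Next, because $\tilde\varphi$ maps $R[t]$ into $R[t]$, the composite $\tilde\psi \circ \tilde\varphi$ is a well-defined $R$-algebra endomorphism of $R[t]$, and it is injective as a composition of injective maps. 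Finally, for $x \in I_f$ one has $\tilde\varphi(x) = \varphi(x) \in I_g$, so $(\tilde\psi \circ \tilde\varphi)(x) = \tilde\psi(\varphi(x)) = \psi(\varphi(x)) = \xi(x)$; thus $\tilde\psi \circ \tilde\varphi$ restricts to $\xi$ on $I_f$. Invoking once more the uniqueness part of Lemma~\ref{lem:HW.inj} (legitimate since $f$ is nonconstant and monic), I conclude $\tilde\psi \circ \tilde\varphi = \tilde\xi$.

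I do not expect a genuine obstacle: the argument is bookkeeping layered on Lemma~\ref{lem:HW.inj}. The only place demanding a little care is checking that $\tilde\psi \circ \tilde\varphi$ really satisfies the hypotheses of the uniqueness clause — namely, that it is an endomorphism of $R[t]$ (which uses $\tilde\varphi(R[t]) \subseteq R[t]$, part of the conclusion of Lemma~\ref{lem:HW.inj}), that it is injective, and that it restricts on $I_f$ to a map into $I_h$. As an alternative to the uniqueness argument one could compare images of $t$: using the formula $\tilde\phi(t) = \varphi(tf)\varphi(f)^{-1}$ from the proof of Lemma~\ref{lem:HW.inj} and the fact that an $R$-algebra endomorphism of $\bbk(t)$ is determined by the image of $t$, both $\tilde\xi(t)$ and $(\tilde\psi \circ \tilde\varphi)(t)$ equal $\xi(tf)\xi(f)^{-1}$ in $\bbk(t)$; but routing through uniqueness sidesteps this computation.
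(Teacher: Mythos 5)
Your argument is correct. The paper itself offers no details here --- its proof reads ``Immediate from the proof of Lemma~\ref{lem:HW.inj}'', i.e.\ the intended route is to unwind the explicit construction (the extension is pinned down in $\bbk(t)$ by $\tilde\phi(t)=\varphi(tf)\varphi(f)^{-1}$, and one checks directly that this formula returns $t$ for the identity and is compatible with composition). You instead treat Lemma~\ref{lem:HW.inj} as a black box and lean entirely on its uniqueness clause: exhibit ${\rm id}_{R[t]}$ (resp.\ $\tilde\psi\circ\tilde\varphi$) as an injective $R$-algebra endomorphism of $R[t]$ restricting to $\varphi$ (resp.\ $\xi$) on $I_f$, and conclude it must coincide with the unique extension. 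The two arguments are two faces of the same fact --- uniqueness in Lemma~\ref{lem:HW.inj} is itself proved by the determined-by-$\phi(t)$ computation --- but your version has the advantage of depending only on the \emph{statement} of Lemma~\ref{lem:HW.inj} rather than on its proof, and you correctly attend to the only delicate points: that $\tilde\psi\circ\tilde\varphi$ is a well-defined injective endomorphism of $R[t]$, that $\tilde\varphi(x)=\varphi(x)\in I_g$ so the composite restricts on $I_f$ to $\psi\circ\varphi=\xi$, and that the hypotheses needed to invoke Lemma~\ref{lem:HW.inj} for $\varphi$, $\psi$ and $\xi$ are respectively that $f$, $g$ and $f$ be nonconstant and monic (no condition on $h$ is needed). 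No gaps.
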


\begin{proof}
Immediate from the proof of Lemma~\ref{lem:HW.inj}.
\end{proof}

The next result, which is a direct consequence of Lemmas~\ref{lem:HW.inj} and \ref{lem:comp.exts}, shows that any isomorphism between ideals of the form $I_f$ extends uniquely to an automorphism of $R[t]$.

\begin{corollary}[Hahn-Wofsey] \label{cor:HW}
If $f$ and $g$ are nonconstant monic polynomials in $R[t]$, then every $R$-algebra isomorphism $\varphi \colon I_f \to I_g$ extends uniquely to an $R$-algebra automorphism $\phi \colon R[t] \to R[t]$.  Moreover, if $f=g$, then the function $\iota_f \colon \Aut (I_f) \to \Aut (R[t])$ given by $\iota_f (\varphi) = \phi$ is an injective homomorphism of groups.
\end{corollary}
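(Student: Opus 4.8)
The plan is to deduce everything from Lemmas~\ref{lem:HW.inj} and \ref{lem:comp.exts} together with the observation that an isomorphism is in particular an injective homomorphism, and that its inverse is also an injective homomorphism. First I would apply Lemma~\ref{lem:HW.inj} to the isomorphism $\varphi \colon I_f \to I_g$ to obtain a unique injective $R$-algebra endomorphism $\phi \colon R[t] \to R[t]$ extending it. Symmetrically, $\varphi^{-1} \colon I_g \to I_f$ is an injective $R$-algebra homomorphism (here we use that $f$ and $g$ play interchangeable roles, both being nonconstant and monic), so Lemma~\ref{lem:HW.inj} yields a unique injective $R$-algebra endomorphism $\psi \colon R[t] \to R[t]$ extending $\varphi^{-1}$.

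Next I would show $\psi \circ \phi = \mathrm{id}_{R[t]}$ and $\phi \circ \psi = \mathrm{id}_{R[t]}$, which forces $\phi$ to be an automorphism with inverse $\psi$. For this, note that $\psi \circ \phi$ is an injective $R$-algebra endomorphism of $R[t]$ (composition of such) whose restriction to $I_f$ is $\varphi^{-1} \circ \varphi = \mathrm{id}_{I_f}$; by the uniqueness clause of Lemma~\ref{lem:HW.inj} applied to $\mathrm{id}_{I_f}$ — whose unique extension is $\mathrm{id}_{R[t]}$ by the remark preceding Lemma~\ref{lem:comp.exts} — we conclude $\psi \circ \phi = \mathrm{id}_{R[t]}$. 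The identity $\phi \circ \psi = \mathrm{id}_{R[t]}$ is obtained the same way with the roles of $f$ and $g$ swapped. Hence $\phi \in \Aut(R[t])$, and its uniqueness as an extension of $\varphi$ is already guaranteed by Lemma~\ref{lem:HW.inj}.

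Finally, for the case $f = g$, I would verify that $\iota_f \colon \Aut(I_f) \to \Aut(R[t])$, $\varphi \mapsto \phi$, is a group homomorphism: given $\varphi, \psi \in \Aut(I_f)$ with extensions $\tilde\varphi, \tilde\psi$, Lemma~\ref{lem:comp.exts} gives $\widetilde{\psi \circ \varphi} = \tilde\psi \circ \tilde\varphi$, i.e. $\iota_f(\psi \circ \varphi) = \iota_f(\psi) \circ \iota_f(\varphi)$. Injectivity of $\iota_f$ is immediate since $\phi$ restricts to $\varphi$ on $I_f$, so $\phi = \mathrm{id}_{R[t]}$ implies $\varphi = \mathrm{id}_{I_f}$. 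There is essentially no obstacle here: the entire content has been front-loaded into Lemma~\ref{lem:HW.inj}, and the only point requiring a moment's care is invoking the uniqueness of extensions to identify $\psi \circ \phi$ with $\mathrm{id}_{R[t]}$ rather than attempting a direct computation.
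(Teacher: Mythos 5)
Your proposal is correct and follows essentially the same route as the paper: extend $\varphi$ and $\varphi^{-1}$ via Lemma~\ref{lem:HW.inj}, identify the composites with $\mathrm{id}_{R[t]}$ via the uniqueness of extensions and Lemma~\ref{lem:comp.exts}, and deduce the homomorphism property of $\iota_f$ from the compatibility of extension with composition. You merely spell out the details that the paper compresses into a single sentence.
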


\begin{proof}
By Lemma~\ref{lem:HW.inj}, $\varphi$ extends uniquely to an injective $R$-algebra homomorphism $\tilde\varphi \colon R[t] \to R[t]$.  Now, since $\varphi$ is an isomorphism, it admits an inverse $\psi \colon I_g \to I_f$, which by Lemma~\ref{lem:HW.inj}, also extends uniquely to an injective $R$-algebra homomorphism $\tilde\psi \colon R[t] \to R[t]$.  Since $\varphi$ and $\psi$ are inverses of each other, then $\tilde\varphi$ and $\tilde\psi$ are also inverses of each other by Lemma~\ref{lem:comp.exts}.  This proves the first part of the statement.  The second part follows from the first one and from Lemma~\ref{lem:comp.exts}.
\end{proof}

We finish this subsection by proving necessary and sufficient conditions under which there exists isomorphisms between ideals of the form $I_f$.  They will be repeatedly used in the coming sections.

\begin{proposition} \label{prop:HW}
Let $f$ and $g$ be nonconstant monic polynomials in $R[t]$.  If $\varphi \colon I_f \to I_g$ is an isomorphism of $R$-algebras, then $\deg(f)=\deg(g)$.  Moreover, $\varphi(f) = \lambda g$ for some $\lambda \in R^\times$.
\end{proposition}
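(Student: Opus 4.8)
The plan is to leverage Corollary~\ref{cor:HW}: since $\varphi \colon I_f \to I_g$ is an isomorphism of $R$-algebras, it extends uniquely to an $R$-algebra automorphism $\phi \colon R[t] \to R[t]$. The first order of business is to understand what automorphisms of $R[t]$ look like. I would argue that any $R$-algebra automorphism $\phi$ of $R[t]$ is affine, i.e. $\phi(t) = at + b$ with $a \in R^\times$ and $b \in R$. Indeed, $\phi$ is determined by $\phi(t)$, and since $\phi$ is surjective there must be some $h(t) \in R[t]$ with $h(\phi(t)) = t$; comparing degrees (using that $R$ is a domain, so degrees are additive under composition) forces $\deg \phi(t) = 1$, and then surjectivity onto degree-$1$ polynomials forces the leading coefficient $a$ of $\phi(t)$ to be a unit.

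Next I would use the fact that $\phi$ restricts to $\varphi$ on $I_f$, so $\phi(I_f) = \varphi(I_f) = I_g$. Since $I_f = f \cdot R[t]$ and $I_g = g \cdot R[t]$, and $\phi$ is a ring automorphism, we get $\phi(f) \cdot \phi(R[t]) = \phi(I_f) = I_g = g \cdot R[t]$, hence $\phi(f) \cdot R[t] = g \cdot R[t]$ as ideals of $R[t]$. Two monic (hence nonzero) polynomials generate the same ideal in $R[t]$ iff they differ by a unit of $R[t]$, which (again since $R$ is a domain) is just an element of $R^\times$. Therefore $\phi(f) = \lambda g$ for some $\lambda \in R^\times$. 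Finally, $\phi(f) = f(\phi(t)) = f(at+b)$, and since $\deg f(at+b) = \deg f$ (the substitution $t \mapsto at+b$ with $a \in R^\times$ preserves degree), we conclude $\deg(g) = \deg(\phi(f)) = \deg(f)$. Because $\phi$ extends $\varphi$ and $f \in I_f$, we have $\varphi(f) = \phi(f) = \lambda g$, which gives both assertions.

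The main obstacle, I expect, is pinning down the structure of $\Aut(R[t])$ carefully over a domain $R$ rather than a field — in particular justifying that $\phi(t)$ has degree exactly $1$ and unit leading coefficient. This requires using that $R$ is an integral domain so that $\deg(pq) = \deg(p) + \deg(q)$ and so that units of $R[t]$ coincide with $R^\times$; both are standard but should be invoked explicitly. The rest — translating "same ideal" into "differ by a unit" and tracking degrees through the affine substitution — is routine.
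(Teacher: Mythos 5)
Your proof is correct, but it reaches the two conclusions in a different order than the paper and by a slightly different mechanism. The paper first compares degrees: since $\phi(t)=\alpha t+\beta$ with $\alpha\in R^\times$, $\deg\varphi(f)=\deg f$, while $\varphi(f)\in I_g$ forces $\deg\varphi(f)\ge\deg g$, giving $\deg f\ge\deg g$; it then runs the same argument on $\varphi^{-1}$ to get the reverse inequality, and only afterwards deduces $\varphi(f)=\lambda g$ from the degree match. You instead observe that $\phi(I_f)=\varphi(I_f)=I_g$, i.e.\ $\phi(f)R[t]=gR[t]$, so the two generators differ by a unit of $R[t]$, which over a domain is an element of $R^\times$; the degree equality then falls out of $\deg f(at+b)=\deg f$. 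Your route avoids the symmetric appeal to the inverse map and pins down $\lambda\in R^\times$ more directly (the paper's final step, inferring invertibility of $\lambda$ from ``same degree,'' implicitly uses that both leading coefficients are units, which your ideal-equality argument sidesteps). You also take the trouble to prove that every $R$-algebra automorphism of $R[t]$ is affine with unit leading coefficient, which the paper merely asserts; that is a worthwhile addition given that $R$ is only assumed to be a domain. Both arguments rest on the same foundation, namely Corollary~\ref{cor:HW}.
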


\begin{proof}
By Corollary~\ref{cor:HW}, $\varphi$ extends uniquely to an automorphism $\phi$ of $R[t]$.  Since every automorphism of $R[t]$ maps $t$ to $\alpha t + \beta$ for some $\alpha \in R^\times$ and $\beta \in R$, it follows that the degree of $\phi(f) = \varphi(f)$ is the same as the degree of $f$.  Moreover, since $\varphi(f)$ is an element in $I_g$, then $\deg (\varphi(f)) \ge \deg (g)$.  This implies that $\deg (f) \ge \deg (g)$.

Now, since $\varphi$ is assumed to be an isomorphism, there is an inverse isomorphism of $R$-algebras $\varphi^{-1} \colon I_g \to I_f$.  Replacing $\varphi$ by $\varphi^{-1}$ and applying the argument from the previous paragraph to $\varphi^{-1}$, we conclude that $\deg (g) \ge \deg(f)$.  Thus $\deg (f) = \deg (g)$.  Moreover, since $\varphi(f) \in I_g$ and it has the same degree as $g$, then $\varphi(f)$ must be equal to $\lambda g$ for some $\lambda \in R^\times$.
\end{proof}

\begin{proposition} \label{prop:lambda}
Let $f \in R[t]$ be a nonconstant monic polynomial, $\alpha\in R^\times$, $\beta\in R$, and $\phi$ be the unique automorphism of $R[t]$ such that $\phi(t)=\alpha t+\beta$.  The restriction of $\phi$ to $I_f$ is an automorphism of $I_f$ if and only if $\phi(f)=\alpha^{\deg(f)} f$.
\end{proposition}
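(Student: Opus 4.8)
The plan is to prove the two implications separately, leaning on Proposition~\ref{prop:HW} for the forward direction and on a short ideal computation for the converse.

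For the ``only if'' direction, suppose $\phi|_{I_f}$ is an automorphism of $I_f$. Then it is in particular an $R$-algebra isomorphism $I_f \to I_f$, so Proposition~\ref{prop:HW} (with $g=f$) yields $\phi(f)=\lambda f$ for some $\lambda\in R^\times$; the only remaining task is to check that $\lambda=\alpha^{\deg(f)}$. This I would do by comparing leading coefficients: if $n=\deg(f)$ and $a_n\in R^\times$ is the leading coefficient of $f$, then since $\phi(t)=\alpha t+\beta$ is linear the coefficient of $t^n$ in $\phi(f)$ is $a_n\alpha^n$, whereas in $\lambda f$ it is $\lambda a_n$; cancelling $a_n$ (legitimate since $R$ is an integral domain) gives $\lambda=\alpha^{n}$.

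For the ``if'' direction, assume $\phi(f)=\alpha^{\deg(f)}f$. Since $\phi\in\Aut(R[t])$ is injective, multiplicative and $R$-linear, its restriction to $I_f$ is automatically an injective $R$-algebra homomorphism; the one thing to verify is that its image is $I_f$ itself. Writing $I_f=f\cdot R[t]$ and using that $\phi$ is surjective and multiplicative on $R[t]$, one computes
\[
\phi(I_f)=\phi\bigl(f\cdot R[t]\bigr)=\phi(f)\cdot R[t]=\alpha^{\deg(f)}f\cdot R[t]=f\cdot R[t]=I_f,
\]
the penultimate equality holding because $\alpha^{\deg(f)}\in R^\times$. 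Hence $\phi|_{I_f}\colon I_f\to I_f$ is bijective, i.e. an automorphism of $I_f$.

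I do not expect a genuine obstacle here: the forward direction is a bookkeeping argument on leading coefficients once Proposition~\ref{prop:HW} supplies $\phi(f)=\lambda f$, and the converse is essentially a single line. The only points that need care are invoking the integral-domain hypothesis to cancel $a_n$, and recording explicitly that $I_f=f\cdot R[t]$ so that the image computation above is valid.
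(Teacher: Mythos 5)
Your proof is correct and follows essentially the same route as the paper: the forward direction combines Proposition~\ref{prop:HW} with a comparison of leading coefficients, and the converse uses multiplicativity of $\phi$ together with the fact that $\alpha^{\deg(f)}$ is a unit. Your converse is marginally more streamlined in that you compute $\phi(I_f)=I_f$ directly from surjectivity of $\phi$ on $R[t]$, whereas the paper shows $\phi(I_f)\subseteq I_f$ and then runs the same argument for $\phi^{-1}$; both are fine.
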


\begin{proof}
By Proposition~\ref{prop:HW}, if the restriction of $\phi$ to $I_f$ is an automorphism, then $\phi(f)=\lambda f$ for some $\lambda \in R^\times$.  Since $f$ is assumed to be nonconstant and monic, the leading coefficient of $\lambda f (t)$ is $r\lambda$ for some $r \in R^\times$.  Since $\phi(t)$ is assumed to be $\alpha t + \beta$, the leading coefficient of $\phi(f(t)) = f(\phi(t))$ is $r\alpha^{\deg(f)}$.  Since $r \in R^\times$, this proves the \emph{only if} part.

To prove the converse, suppose $\phi(f)=\alpha^{\deg(f)}f$.  Let $\varphi$ denote the restriction of $\phi$ to $I_f$ and notice that $\varphi(I_f)\subseteq I_f$, as
\[
\varphi(h f) = \phi(h f) = \phi(h) \phi(f) = \alpha^{\deg(f)} \phi(h)f
\]
for all $h \in R[t]$.  Moreover, since $\varphi$ is the restriction of an automorphism, it is an injective homomorphism of algebras.  Using a similar argument, one can show that $\varphi^{-1} = \phi^{-1}\arrowvert_{I_f}$ is an injective homomorphism of algebras.  This proves that $\varphi$ is an automorphism of $I_f$.
\end{proof}

\subsection{Relation between automorphisms and permutations} \label{ss:perms}

In this subsection we describe isomorphisms between ideals of the form $I_f$ and $I_g$ in terms of certain bijections between $Z(f)$ and $Z(g)$. In particular, automorphisms of $I_f$ are described in terms of certain permutations of $Z(f)$.

\begin{definition}
Given nonconstant polynomials $f$ and $g \in R[t]$, we define $\Gamma_R (f,g)$ to be the set consisting of all bijections $\eta \colon Z(f) \to Z(g)$ for which:
\begin{enumerate}[(a)]
\item There exists $\alpha \in R^\times$ and $\beta \in R$ such that $\eta(z)=\frac{1}{\alpha}(z-\beta)$ for all $z \in Z(f)$;
\item The multiplicity of $\eta(z)$ as a root of $g$ is the same as the multiplicity of $z$ as a root of $f$.
\end{enumerate}
When $f=g$, we will denote $\Gamma_R (f,f)$ simply by $\Gamma_R (f)$.
\end{definition}

In the particular case when $f=g$ is a nonconstant monic polynomial in $R[t]$, we prove in the next result that $\Gamma_R (f)$ is in fact a group.

\begin{lemma} \label{lem:aut=perm}
For any nonconstant monic polynomial $f \in R[t]$, $\Gamma_R (f)$ is a subgroup of $\sym Z(f)$.
\end{lemma}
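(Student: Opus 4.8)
The plan is to verify directly that $\Gamma_R(f)$ is closed under composition and inversion, using the fact established in Lemma~\ref{lem:aut=perm}'s predecessors that the data defining an element of $\Gamma_R(f)$ comes from an honest automorphism of $R[t]$. First I would observe that $\Gamma_R(f)$ is nonempty, since the identity map on $Z(f)$ lies in it (take $\alpha = 1$, $\beta = 0$). Then, given $\eta_1, \eta_2 \in \Gamma_R(f)$ with associated data $(\alpha_1, \beta_1)$ and $(\alpha_2, \beta_2)$, I would compute the composition $\eta_2 \circ \eta_1$ explicitly: since $\eta_1(z) = \frac{1}{\alpha_1}(z - \beta_1)$ and $\eta_2(w) = \frac{1}{\alpha_2}(w - \beta_2)$, one gets $(\eta_2 \circ \eta_1)(z) = \frac{1}{\alpha_2 \alpha_1}\bigl(z - (\beta_1 + \alpha_1 \beta_2)\bigr)$, which is again of the required form with $\alpha = \alpha_1 \alpha_2 \in R^\times$ and $\beta = \beta_1 + \alpha_1 \beta_2 \in R$. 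The same kind of elementary computation shows that $\eta_1^{-1}(w) = \alpha_1 w + \beta_1 = \frac{1}{\alpha_1^{-1}}\bigl(w - (-\alpha_1^{-1}\beta_1)\bigr)$, so $\eta_1^{-1}$ satisfies condition (a) with $\alpha = \alpha_1^{-1}$ and $\beta = -\alpha_1^{-1}\beta_1$.

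The remaining point is that conditions (a) and the multiplicity condition (b) together force $\eta_2 \circ \eta_1$ and $\eta_1^{-1}$ to actually be bijections $Z(f) \to Z(f)$, i.e. that the affine map $z \mapsto \frac{1}{\alpha}(z-\beta)$ sends $Z(f)$ into (hence, by finiteness and injectivity, onto) $Z(f)$, and preserves multiplicities. For composition this is immediate: $\eta_1$ maps $Z(f)$ bijectively to $Z(f)$ preserving multiplicities, and so does $\eta_2$, so the composite does too. For the inverse, since $\eta_1 \colon Z(f) \to Z(f)$ is a bijection preserving multiplicities and $Z(f)$ is finite, its set-theoretic inverse is automatically a bijection $Z(f) \to Z(f)$, and the multiplicity of $\eta_1^{-1}(w)$ equals the multiplicity of $w$ because applying $\eta_1$ to the pair $\bigl(\eta_1^{-1}(w), w\bigr)$ and invoking condition (b) for $\eta_1$ gives the equality. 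Thus both $\eta_2 \circ \eta_1$ and $\eta_1^{-1}$ lie in $\Gamma_R(f)$.

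I do not expect any serious obstacle here; the statement is essentially a bookkeeping exercise once one notices that the pair $(\alpha, \beta)$ behaves functorially under composition of the associated affine transformations and that condition (b) is symmetric under passing to inverses. The only mild subtlety worth spelling out is the justification that an injective self-map of the finite set $Z(f)$ is a bijection, and that the composite and inverse of maps of the prescribed affine form are again of that form with parameters in $R^\times$ and $R$ respectively — which is exactly where one uses that $R$ is closed under the relevant operations and $R^\times$ is a group. I would close by remarking that, in fact, this argument can be shortened by appealing to the map sending $\eta$ with data $(\alpha,\beta)$ to the automorphism $\phi$ of $R[t]$ with $\phi(t) = \alpha t + \beta$ (cf. Proposition~\ref{prop:lambda}), under which composition and inversion in $\Gamma_R(f)$ correspond to composition and inversion in $\Aut(R[t])$; but since the formulas above are so short, the direct verification is cleaner to present.
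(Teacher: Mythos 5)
Your proposal is correct and follows essentially the same route as the paper's proof: verify the identity lies in $\Gamma_R(f)$, compute the affine parameters of a composite and of an inverse explicitly, and note that condition (b) is inherited by composites and inverses. The extra remarks about injective self-maps of finite sets and the alternative via $\Aut(R[t])$ are harmless but not needed, since elements of $\Gamma_R(f)$ are bijections by definition.
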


\begin{proof}
First notice that the identity element of $\sym Z(f)$ belongs to $\Gamma_R (f)$.  In order to show that $\Gamma_R (f)$ is closed under composition, let $\eta_1,\eta_2\in \Gamma_R (f)$ be such that $\eta_i(z)=\frac{1}{\alpha_i}(z-\beta_i)$. Then:
\begin{align*}
\eta_1\circ\eta_2(z)
& = \eta_1 \left( \frac{1}{\alpha_2}(z-\beta_2) \right) \\
& =\frac{1}{\alpha_1} \left( \frac{1}{\alpha_2} (z-\beta_2)-\beta_1 \right)\\
&=\frac{1}{\alpha_1\alpha_2}(z - (\beta_2+\alpha_2\beta_1)).
\end{align*}
Moreover, since $\eta_1,\eta_2 \in \Gamma_R (f)$, the multiplicity of $z$ is the same as the multiplicity of $\eta_2(z)$ and the same as the multiplicity of $\eta_1(\eta_2(z))$ as a root of $f$.

To finish the proof, notice that the inverse of any $\eta \in \Gamma_R (f)$, $\eta(z)=\frac{1}{\alpha}(z-\beta)$, $\alpha \in R^\times$ and $\beta\in R$, is given by $\eta^{-1}(z)={\alpha}(z+\frac \beta \alpha)$.  Moreover, the multiplicities of $\eta^{-1}(z)$ and $z = \eta (\eta^{-1}(z))$ as roots of $f$ must be the same. Hence $\eta^{-1}$ also belongs to $\Gamma_R (f)$.
\end{proof}

Given two polynomials $f, g \in R[t]$, denote by $\iso_R (I_f, I_g)$ the set of isomorphisms of $R$-algebras from $I_f$ to $I_g$.  The next lemma, which generalizes Lemma~\ref{lem:aut=perm}, gives a bijection between $\iso_R (I_f, I_g)$ and $\Gamma_R (f,g)$.

\begin{lemma} \label{lem:iso=bij}
For any nonconstant monic polynomials $f$ and $g$ in $R[t]$, there exists a bijection between $\iso_R (I_f, I_g)$ and $\Gamma_R(f,g)$.  Moreover, if $f=g$, then this bijection is an isomorphism of groups between $\Aut(I_f)$ and $\Gamma_R (f)$.
\end{lemma}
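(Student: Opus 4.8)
The plan is to construct explicit maps in both directions and check they are mutually inverse. Given $\varphi \in \iso_R(I_f, I_g)$, Corollary~\ref{cor:HW} extends it uniquely to an automorphism $\phi$ of $R[t]$, and every such automorphism has the form $\phi(t) = \alpha t + \beta$ for some $\alpha \in R^\times$, $\beta \in R$. I would then define $\eta_\varphi \colon Z(f) \to Z(g)$ by $\eta_\varphi(z) = \frac{1}{\alpha}(z - \beta)$; equivalently, $\eta_\varphi = \phi^{-1}|_{Z(f)}$, since $\phi^{-1}(t) = \frac{1}{\alpha}(t - \beta)$. Conversely, given $\eta \in \Gamma_R(f,g)$ with $\eta(z) = \frac{1}{\alpha}(z-\beta)$, I would let $\phi$ be the automorphism of $R[t]$ with $\phi(t) = \alpha t + \beta$ and define $\varphi_\eta$ to be its restriction to $I_f$.

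The first substantive point to verify is that these assignments land in the claimed sets. For $\eta_\varphi$: by Proposition~\ref{prop:HW}, $\varphi(f) = \lambda g$ for some $\lambda \in R^\times$, so $\phi(f) = \lambda g$; comparing roots in $\bb K$, $z$ is a root of $f$ of multiplicity $m$ iff $\phi(t) - \text{(root of }f\text{)}$ ... more carefully, $f = c\prod (t - z_i)^{m_i}$ over $\bb K$ gives $\phi(f) = c\,\alpha^{\deg f}\prod (t - \eta_\varphi(z_i)^{-1}\cdots)$ — I would instead argue directly: $g(w) = 0$ iff $\lambda g(w) = 0$ iff $\phi(f)(w) = 0$ iff $f(\alpha w + \beta) = 0$ iff $\alpha w + \beta \in Z(f)$ iff $w = \eta_\varphi^{-1}$-image, and multiplicities are preserved because $\phi$ is an automorphism of $\bb K[t]$, hence preserves the factorization type. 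So $\eta_\varphi \in \Gamma_R(f,g)$. For $\varphi_\eta$: condition (b) on $\eta$ forces $f(\alpha t + \beta)$ and $g(t)$ to have the same roots with the same multiplicities in $\bb K$, and since both are monic up to a unit of $R$, one gets $\phi(f) = \mu g$ with $\mu \in \bbk^\times$; comparing leading coefficients (both leading coefficients lie in $R^\times$ and $\alpha \in R^\times$) shows $\mu \in R^\times$, so $\phi(I_f) = \phi(f)R[t] = g R[t] = I_g$, and likewise for $\phi^{-1}$, giving $\varphi_\eta \in \iso_R(I_f, I_g)$. When $f = g$ this is exactly Proposition~\ref{prop:lambda} together with the observation that condition (b) of $\Gamma_R(f)$ encodes $\phi(f) = \alpha^{\deg f} f$.

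Next I would check the two constructions are mutually inverse. Starting from $\varphi$, extending to $\phi$ with $\phi(t) = \alpha t + \beta$, forming $\eta_\varphi(z) = \frac1\alpha(z-\beta)$, and then building $\varphi_{\eta_\varphi}$: its defining automorphism has $t \mapsto \alpha t + \beta$, which is $\phi$ again by uniqueness of the extension (Corollary~\ref{cor:HW}), so $\varphi_{\eta_\varphi} = \phi|_{I_f} = \varphi$. Starting from $\eta$ with parameters $(\alpha, \beta)$, the automorphism $\phi$ restricts to $\varphi_\eta$, and $\eta_{\varphi_\eta}(z) = \frac1\alpha(z - \beta) = \eta(z)$; here one should note that although $\alpha, \beta$ are not a priori unique data attached to $\eta$ (if $Z(f)$ is small), the resulting map $\eta_{\varphi_\eta}$ only depends on $\phi$, and $\phi$ is determined by $\eta$ through the required form in condition (a) — in any case $\eta_{\varphi_\eta}$ and $\eta$ agree as functions on $Z(f)$, which is all that is needed. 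This establishes the bijection.

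Finally, for the group statement when $f = g$: I would verify that the bijection $\Phi \colon \Aut(I_f) \to \Gamma_R(f)$, $\varphi \mapsto \eta_\varphi$, is a homomorphism. By Corollary~\ref{cor:HW} the extension map $\iota_f \colon \Aut(I_f) \to \Aut(R[t])$ is a group homomorphism, and $\varphi \mapsto \phi \mapsto \phi^{-1}(t)$-substitution is anti... — to avoid sign/order confusion I would simply compute: if $\varphi_1, \varphi_2 \in \Aut(I_f)$ extend to $\phi_i(t) = \alpha_i t + \beta_i$, then $\varphi_2 \circ \varphi_1$ extends to $\phi_2 \circ \phi_1$ by Lemma~\ref{lem:comp.exts}, and $(\phi_2 \circ \phi_1)(t) = \alpha_2(\alpha_1 t + \beta_1) + \beta_2 = \alpha_1\alpha_2 t + (\beta_1\alpha_2 + \beta_2)$. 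Comparing with the composition formula for $\eta_1 \circ \eta_2$ computed in the proof of Lemma~\ref{lem:aut=perm} — namely $\eta_1 \circ \eta_2(z) = \frac{1}{\alpha_1\alpha_2}(z - (\beta_2 + \alpha_2\beta_1))$ — one sees $\eta_{\varphi_2 \circ \varphi_1} = \eta_{\varphi_1} \circ \eta_{\varphi_2}$, so $\Phi$ reverses order. Composing with the inversion anti-automorphism of either group (or, cleaner, defining the bijection as $\varphi \mapsto \eta_{\varphi^{-1}}$, equivalently $\eta = \phi|_{Z(f)}$ rather than $\phi^{-1}|_{Z(f)}$) yields a genuine group isomorphism. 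I expect the only real subtlety to be this bookkeeping about whether one reads off $\phi$ or $\phi^{-1}$ on the roots, and correspondingly which direction composes covariantly; the rest is a direct application of Propositions~\ref{prop:HW} and \ref{prop:lambda}, Corollary~\ref{cor:HW}, and Lemmas~\ref{lem:comp.exts} and \ref{lem:aut=perm}.
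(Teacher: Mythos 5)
Your construction is the paper's: extend $\varphi$ to $\phi \in \Aut(R[t])$ via Corollary~\ref{cor:HW}, write $\phi(t) = \alpha t + \beta$, send $\varphi$ to the bijection $z \mapsto \frac{1}{\alpha}(z-\beta)$, and invert by restricting the substitution automorphism; the well-definedness checks and the mutual-inverse verification also match. The problem is your last step. You compute $(\phi_2 \circ \phi_1)(t) = \alpha_2(\alpha_1 t + \beta_1) + \beta_2$, but that expression is $(\phi_1 \circ \phi_2)(t)$: since $\phi_2$ is the $R$-algebra map determined by $\phi_2(t) = \alpha_2 t + \beta_2$, one has $(\phi_2 \circ \phi_1)(t) = \phi_2(\alpha_1 t + \beta_1) = \alpha_1 \phi_2(t) + \beta_1 = \alpha_1\alpha_2 t + (\alpha_1\beta_2 + \beta_1)$. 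With the indices kept straight, $\eta_{\varphi_1 \circ \varphi_2}(z) = \frac{1}{\alpha_1\alpha_2}\left(z - (\beta_2 + \alpha_2\beta_1)\right) = \eta_{\varphi_1}(\eta_{\varphi_2}(z))$, so $\varphi \mapsto \eta_\varphi$ is already a \emph{covariant} homomorphism: the substitution correspondence $\phi \mapsto (z \mapsto \alpha z + \beta)$ reverses composition, and passing to the inverse affine map reverses it again; the two reversals cancel. That is precisely why condition (a) of $\Gamma_R(f,g)$ is phrased with $\frac{1}{\alpha}(z-\beta)$ rather than $\alpha z + \beta$. Your proposed repair is therefore not only unnecessary but incorrect: $\varphi \mapsto \eta_{\varphi^{-1}}$ is an anti-homomorphism, and you cannot conflate that with a homomorphism here, since the abelianness of $\Aut(I_f)$ is only established later (Theorem~\ref{thm:main}) and depends on this lemma. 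Moreover, for $f \neq g$ the map $z \mapsto \alpha z + \beta$ carries $Z(g)$ into $Z(f)$, not $Z(f)$ into $Z(g)$, so ``$\eta = \phi|_{Z(f)}$'' does not even define an element of $\Gamma_R(f,g)$.

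Everything else is essentially the paper's argument. One point you raise and then set aside is worth noting: the inverse assignment $\eta \mapsto \varphi_\eta$ requires choosing $(\alpha,\beta)$ realizing $\eta$, and your remark that ``$\phi$ is determined by $\eta$'' is valid only when $|Z(f)| \geq 2$, since two distinct points are needed to pin down an affine map. When $f$ has a single root the assignment $\varphi \mapsto \eta_\varphi$ is genuinely not injective (compare Proposition~\ref{prop:main0}, where $\Aut(I_f) \cong R^\times$ while $\sym Z(f)$ is trivial). The paper's own proof has the same blind spot, so this is a shared caveat rather than a defect of your write-up relative to the paper; the substantive error you need to fix is the composition computation above.
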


\begin{proof}
If $\deg(f) \neq \deg(g)$, then $\iso_R (I_f, I_g)$ is empty by Proposition~\ref{prop:HW}, and $\Gamma_R(f,g)$ is empty because there exists no bijection between $Z(f)$ and $Z(g)$ that preserves the multiplicities of every root.  So, we will assume that $\deg(f) = \deg(g)$, and write
\[
f(t) = (t-a_1)^{m_1} \dotsm (t-a_r)^{m_r}
\quad \textup{and} \quad
g(t)=(t-b_1)^{n_1} \dotsm (t-b_r)^{n_r}
\]
for some $r, m_1, n_1, \dotsc, m_r, n_r > 0$ and $a_1, b_1, \dotsc, a_r, b_r \in \bb K$ such that $a_i \neq a_j$ and $b_i\neq b_j$ if $i \neq j$.

Now we will construct, to every $\varphi \in \iso_R (I_f, I_g)$, a corresponding element $\sigma_\varphi \in \Gamma_R(f,g)$. By Corollary~\ref{cor:HW}, every $\varphi \in \iso_R (I_f, I_g)$ extends uniquely to an automorphism $\phi$ of $R [t]$.  Let $\alpha \in R^\times$ and $\beta \in R$ be such that $\phi (t) = \alpha t + \beta$, and observe that
\begin{align*}
h(t) g (t) = \varphi (f(t))
&= \phi (f(t)) \\
&= f (\phi(t)) \\
&= (\phi(t) - a_1)^{m_1} \dotsm (\phi(t) - a_r)^{m_r} \\
&= \alpha^{\deg(f)} \left( t - \frac{a_1 - \beta}{\alpha} \right)^{m_1} \cdots \left( t - \frac{a_r - \beta}{\alpha} \right)^{m_r}
\end{align*}
for some $h \in R[t]$.  Since the degree and the roots of the polynomial on the left are the same as the degree and the roots of the polynomial on the right, it follows that $h \in R^\times$ and for every $i = 1, \dotsc, r$, $\frac{1}{\alpha}(a_i - \beta) = b_j$ for some $j = 1, \dotsc, r$. Denote $b_j$ by $\sigma_{\varphi} (a_i)$ and observe that $\sigma_\varphi \in \Gamma_R (f,g)$.

Now we will construct the inverse of $\varphi \mapsto \sigma_\varphi$.  Given $\sigma \in \Gamma_R (f, g)$, let $\alpha \in R^\times$ and $\beta \in R$ be such that $\sigma (z) = \frac{1}{\alpha} (z - \beta)$ for all $z \in Z(f)$.  Then define $\varphi_\sigma = \phi\arrowvert_{I_f}$ where $\phi$ is the unique automorphism of $R[t]$ satisfying $\phi(t) = \alpha t + \beta$.  Observe that $\varphi_\sigma \in \iso_R (I_f, I_g)$, that $\varphi_{\sigma_\varphi} = \varphi$, and that $\sigma_{\varphi_\sigma} = \sigma$ for all $\varphi \in \iso_R (I_f, I_g)$ and $\sigma \in \Gamma_R (f,g)$.  Thus, we obtained a bijection between $\iso_R (I_f, I_g)$ and $\Gamma_R(f,g)$

To finish the proof, we show that, if $f=g$, then the correspondence $\varphi \mapsto \sigma_\varphi$ is a homomorphism of groups.  Consider $\varphi_1, \varphi_2 \in \Aut(I_f)$ and their extensions $\phi_1 = \iota_f (\varphi_1), \phi_2 = \iota_f (\varphi_2) \in \Aut(R[t])$.  Let $\alpha_1, \alpha_2 \in R^\times$ and $\beta_1, \beta_2 \in R$ be such that $\phi_1(t) = \alpha_1 t + \beta_1$ and $\phi_2 (t) = \alpha_2 t + \beta_2$.  Then, by the above construction, the corresponding permutations of $Z(f)$ are given by $\sigma_{\varphi_1} (z) = \frac{1}{\alpha_1} (z - \beta_1)$ and $\sigma_{\varphi_2} (z) = \frac{1}{\alpha_2} (z - \beta_2)$. Thus
\begin{align*}
\sigma_{\varphi_1} (\sigma_{\varphi_2} (z))
&= \sigma_{\varphi_1} \left( \frac{1}{\alpha_2} (z - \beta_2) \right) \\
&= \frac{1}{\alpha_1} \left( \frac{1}{\alpha_2} (z - \beta_2)- \beta_1 \right)\\
&= \frac{1}{\alpha_1 \alpha_2} (z - (\beta_2 + \alpha_2 \beta_1)).
\end{align*}
Since $\phi_1 (\phi_2 (t)) = \alpha_2 (\alpha_1 t + \beta_1) + \beta_2 = \alpha_1 \alpha_2 t + (\beta_2 + \alpha_2 \beta_1)$, and $\varphi_1 \circ \varphi_2 = \iota_f (\phi_1 \circ \phi_2)$ by Lemma~\ref{lem:comp.exts}, it follows that $\sigma_{\varphi_1} \circ \sigma_{\varphi_2} = \sigma_{\varphi_1 \circ \varphi_2}$ and we conclude that $\Gamma_R(f)$ and $\Aut(I_f)$ are isomorphic.
\end{proof}

Notice that, since $Z(f)$ is a finite set, then $\sym Z(f)$ is a finite group.  Thus, from Lemmas~\ref{lem:aut=perm} and \ref{lem:iso=bij}, we see that $\Aut(I_f)$ is a finite group too.  In Theorem~\ref{thm:main} we will show that $\Aut(I_f)$ is also cyclic, thus abelian.

The next result follows from Lemma~\ref{lem:iso=bij} and will be used in the proof of Theorem~\ref{thm:main2}.

\begin{lemma}
Let $f\in R[t]$ be a nonconstant monic polynomial.  Then for all $k>0$,
\[
\Aut(I_f) \cong \Aut(I_{f^k}).
\]
\end{lemma}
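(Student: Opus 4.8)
The plan is to reduce everything to the combinatorial description of automorphism groups provided by Lemma~\ref{lem:iso=bij}. Since $f$ is nonconstant and monic, its leading coefficient $r$ lies in $R^\times$; hence $f^k$ has leading coefficient $r^k \in R^\times$ and degree $k\deg(f) > 0$, so $f^k$ is also a nonconstant monic polynomial in $R[t]$. Consequently Lemma~\ref{lem:iso=bij} applies to both $f$ and $f^k$, yielding group isomorphisms $\Aut(I_f) \cong \Gamma_R(f)$ and $\Aut(I_{f^k}) \cong \Gamma_R(f^k)$. It therefore suffices to prove that $\Gamma_R(f) = \Gamma_R(f^k)$ as subgroups of $\sym Z(f)$.

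First I would record two elementary facts about passing from $f$ to $f^k$. A scalar $z \in \bb K$ is a root of $f^k$ precisely when it is a root of $f$, so $Z(f^k) = Z(f)$ as subsets of $\bb K$ and in particular the ambient symmetric groups coincide, $\sym Z(f^k) = \sym Z(f)$. Moreover, writing $f(t) = \prod_{z \in Z(f)} (t-z)^{m_z}$ over $\bb K$, where $m_z$ denotes the multiplicity of $z$ as a root of $f$, we get $f^k(t) = \prod_{z \in Z(f)} (t-z)^{k m_z}$, so the multiplicity of $z$ as a root of $f^k$ equals $k m_z$.

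With these two facts, the equality $\Gamma_R(f) = \Gamma_R(f^k)$ follows directly from the definition of $\Gamma_R$. Condition (a), namely that a bijection $\eta$ have the form $z \mapsto \frac{1}{\alpha}(z - \beta)$ for some $\alpha \in R^\times$, $\beta \in R$, refers only to the underlying set $Z(f) = Z(f^k)$ and is therefore the same requirement for $f$ and for $f^k$. For condition (b), a bijection $\eta$ of $Z(f)$ preserves root multiplicities with respect to $f^k$ if and only if $k m_{\eta(z)} = k m_z$ for all $z \in Z(f)$, which — since $k$ is a positive integer — holds if and only if $m_{\eta(z)} = m_z$ for all $z$, i.e.\ if and only if $\eta$ preserves root multiplicities with respect to $f$. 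Hence $\eta \in \Gamma_R(f^k)$ if and only if $\eta \in \Gamma_R(f)$, so $\Gamma_R(f) = \Gamma_R(f^k)$ and therefore $\Aut(I_f) \cong \Gamma_R(f) = \Gamma_R(f^k) \cong \Aut(I_{f^k})$.

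I expect no genuine obstacle here; the argument is essentially bookkeeping. The only points deserving a word of care are checking that $f^k$ is still monic in the paper's sense (leading coefficient invertible, handled above) and keeping in mind that $\Gamma_R$ is defined through roots in $\bb K$, so that replacing $f$ by $f^k$ leaves the root set unchanged while scaling every multiplicity by the same positive integer $k$.
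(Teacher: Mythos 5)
Your argument is correct and is essentially the paper's own proof: both reduce to $\Gamma_R(f)$ and $\Gamma_R(f^k)$ via Lemma~\ref{lem:iso=bij} and then observe that $Z(f)=Z(f^k)$ with all multiplicities scaled by the same factor $k$, so the two groups coincide. Your version just spells out the bookkeeping (monicity of $f^k$, the if-and-only-if on condition (b)) in more detail.
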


\begin{proof}
By Lemma~\ref{lem:iso=bij}, $\Aut (I_f) \cong \Gamma_R (f)$ and $\Aut (I_{f^k}) \cong \Gamma_R (f^k)$.  Since $Z(f) = Z(f^k)$ and the multiplicity of each root of $f^k$ is $k$ times its multiplicity as a root of $f$, then $\Gamma_R (f^k) \cong \Gamma_R (f)$.
\end{proof}

The next result, which also follows directly from Lemma~\ref{lem:iso=bij}, gives a more concrete criterion to determine if two ideals $I_f$ and $I_g$ are isomorphic.

\begin{corollary} \label{cor:If=Ig}
Let $f$ and $g$ be nonconstant monic polynomials in $R[t]$.  The ideals $I_f$ and $I_g$ are isomorphic if and only if $\Gamma_R (f,g)$ is nonempty. \qed
\end{corollary}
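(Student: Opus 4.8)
The plan is to read this off directly from Lemma~\ref{lem:iso=bij}. First I would unwind the definitions: the ideals $I_f$ and $I_g$ are isomorphic as $R$-algebras exactly when the set $\iso_R(I_f, I_g)$ of $R$-algebra isomorphisms $I_f \to I_g$ is nonempty. Then I would invoke Lemma~\ref{lem:iso=bij}, which — since $f$ and $g$ are nonconstant and monic — provides an explicit bijection between $\iso_R(I_f, I_g)$ and $\Gamma_R(f,g)$. A bijection between two sets exists only when the two sets are simultaneously empty or simultaneously nonempty, so $\iso_R(I_f, I_g) \neq \emptyset$ if and only if $\Gamma_R(f,g) \neq \emptyset$. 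This is precisely the claimed equivalence, completing the proof.

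I do not expect any genuine obstacle here: the substantive work has already been carried out in Lemma~\ref{lem:iso=bij}, and this corollary merely repackages the nonemptiness half of that correspondence into a criterion that is convenient to apply (for instance, in detecting when $I_f \not\cong I_g$). If one wanted a fully self-contained argument instead, the forward implication can be extracted from Proposition~\ref{prop:HW} together with the construction of $\sigma_\varphi$ in the proof of Lemma~\ref{lem:iso=bij} — an isomorphism forces $\deg f = \deg g$ and produces an affine, multiplicity-preserving bijection $Z(f) \to Z(g)$ — while the reverse implication comes from the construction of $\varphi_\sigma$ there, namely restricting the automorphism $t \mapsto \alpha t + \beta$ of $R[t]$ to $I_f$. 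But the one-line deduction from Lemma~\ref{lem:iso=bij} is the natural route, and is the one I would present.
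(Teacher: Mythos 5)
Your proposal is correct and matches the paper exactly: the corollary is stated with a \qed precisely because it follows immediately from the bijection of Lemma~\ref{lem:iso=bij}, which is the one-line deduction you give. No further comment is needed.
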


Given $z_0 \in R$ and $f \in R[t]$, denote the polynomial $f(t-z_0) \in R[t]$ by $f_{z_0}(t)$.  The next result shows that such a shift in the roots of $f$ does not change the isomorphism class of the group $\Gamma_R (f)$.

\begin{lemma} \label{lem:perm.shift}
For any $z_0\in R$ and any nonconstant monic polynomial $f \in R[t]$, there exists an isomorphism of groups between $\Gamma_R (f)$ and $\Gamma_R (f_{z_0})$.
\end{lemma}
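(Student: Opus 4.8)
The plan is to exhibit an explicit bijection between $Z(f)$ and $Z(f_{z_0})$ induced by the translation $t \mapsto t - z_0$, and check that it conjugates $\Gamma_R(f)$ onto $\Gamma_R(f_{z_0})$. First I would observe that the roots of $f_{z_0}(t) = f(t - z_0)$ are exactly the elements $z + z_0$ with $z \in Z(f)$, and that the multiplicity of $z + z_0$ as a root of $f_{z_0}$ equals the multiplicity of $z$ as a root of $f$. So the map $\tau \colon Z(f) \to Z(f_{z_0})$, $\tau(z) = z + z_0$, is a multiplicity-preserving bijection. The candidate isomorphism is then conjugation by $\tau$: to each $\eta \in \Gamma_R(f)$ associate $\tau \circ \eta \circ \tau^{-1} \in \sym Z(f_{z_0})$.

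The key step is to verify that $\tau \circ \eta \circ \tau^{-1}$ actually lies in $\Gamma_R(f_{z_0})$, i.e.\ that it satisfies conditions (a) and (b) of the definition. For (b), both $\tau$ and $\tau^{-1}$ preserve multiplicities (of $f$ versus $f_{z_0}$), and $\eta$ preserves multiplicities of $f$, so the composite preserves multiplicities of $f_{z_0}$. For (a), if $\eta(z) = \frac{1}{\alpha}(z - \beta)$ for some $\alpha \in R^\times$, $\beta \in R$, then a direct computation gives
\[
(\tau \circ \eta \circ \tau^{-1})(w) = \eta(w - z_0) + z_0 = \frac{1}{\alpha}(w - z_0 - \beta) + z_0 = \frac{1}{\alpha}\bigl(w - (z_0 + \beta - \alpha z_0)\bigr),
\]
which is of the required affine form with the same $\alpha \in R^\times$ and new translation parameter $z_0 + \beta - \alpha z_0 \in R$. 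Thus conjugation by $\tau$ maps $\Gamma_R(f)$ into $\Gamma_R(f_{z_0})$; the same argument applied with $z_0$ replaced by $-z_0$ (noting $(f_{z_0})_{-z_0} = f$) gives an inverse map, so we have a bijection. It is visibly a group homomorphism, since conjugation by a fixed bijection always is. Together with Lemma~\ref{lem:iso=bij}, which identifies $\Gamma_R(f)$ with $\Aut(I_f)$ and $\Gamma_R(f_{z_0})$ with $\Aut(I_{f_{z_0}})$, this yields the claimed isomorphism of groups.

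I do not anticipate a serious obstacle here; the only point requiring a little care is keeping track of which polynomial ($f$ or $f_{z_0}$) a given multiplicity refers to when checking condition (b), and making sure the new translation parameter $z_0 + \beta - \alpha z_0$ genuinely lies in $R$ (it does, since $z_0, \beta \in R$ and $\alpha \in R^\times \subseteq R$). Alternatively, one could bypass $\Gamma_R$ entirely and argue at the level of ideals: the automorphism $\phi$ of $R[t]$ with $\phi(t) = t - z_0$ restricts to an isomorphism $I_f \to I_{f_{z_0}}$, and conjugation by this isomorphism carries $\Aut(I_f)$ onto $\Aut(I_{f_{z_0}})$ — but the $\Gamma_R$ formulation makes the multiplicity bookkeeping and the affine-form bookkeeping the most transparent, so that is the route I would write up.
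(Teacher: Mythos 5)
Your proposal is correct and is essentially the paper's own argument: the map $\sigma \mapsto \sigma_{z_0}$ with $\sigma_{z_0}(z) = \sigma(z - z_0) + z_0$ used in the paper is exactly your conjugation by $\tau$, and the affine-form and multiplicity checks, the new translation parameter $(1-\alpha)z_0 + \beta$, and the inverse via $-z_0$ all match. (The final appeal to Lemma~\ref{lem:iso=bij} is unnecessary, since the statement concerns only $\Gamma_R(f)$ and $\Gamma_R(f_{z_0})$, but it does no harm.)
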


\begin{proof}
First, notice that, for any $z_0 \in R$, $Z (f_{z_0}) = \{ z + z_0 \mid z \in Z(f) \}$.  Now, given a permutation $\sigma \colon Z(f) \to Z(f)$, consider the permutation $\sigma_{z_0} \colon Z(f_{z_0}) \to Z(f_{z_0})$ given by $\sigma_{z_0} (z) = \sigma(z - z_0) + z_0$. Observe that, if $\sigma$ is a permutation of $Z(f)$ which satisfies $\sigma (z) = \frac 1 \alpha (z - \beta)$ for some $\alpha \in R^\times$ and $\beta \in R$, then $\sigma_{z_0}$ is a permutation of $Z(f_{z_0})$ which satisfies $\sigma_{z_0} (z) = \frac{1}{\alpha} (z - \beta')$ with $\beta' = ((1-\alpha)z_0 + \beta) \in R$.  Also observe that, if $\sigma$ is a permutation of $Z(f)$ such that the multiplicity of $\sigma (z)$ as a root of $f$ is the same as that of $z$, then $\sigma_{z_0}$ is a permutation of $Z(f_{z_0})$ such that the multiplicity of $\sigma_{z_0} (z + z_0) = \sigma (z) + z_0$ is the same as that of $z + z_0$.

To finish the proof, we only need to prove that the correspondence $\sigma \mapsto \sigma_{z_0}$ is a homomorphism of groups, since $(\sigma_{z_0})_{-z_0} (z) = \sigma_{z_0} (z+z_0)-z_0 = (\sigma(z)+z_0)-z_0 = \sigma(z)$ for all $z \in Z(f)$.  Given $\sigma, \rho \in \sym Z(f)$, we have
\begin{align*}
(\sigma \circ \rho)_{z_0} (z)
&= \sigma (\rho (z - z_0)) + z_0 \\
&= \sigma ((\rho (z - z_0) + z_0) - z_0) + z_0 \\
&= \sigma (\rho_{z_0} (z) - z_0) + z_0 \\
&= \sigma_{z_0} (\rho_{z_0} (z))
\end{align*}
for all $z \in Z(f)$.  This shows that the correspondence $\sigma \mapsto \sigma_{z_0}$ is a homomorphism of groups.
\end{proof}

Let $f$ be a nonconstant monic polynomial in $R[t]$, $\varphi \in \Aut(I_f)$, and let $\phi$ denote its extension $\iota_f (\varphi) \in \Aut (R[t])$ (see Corollary~\ref{cor:HW}).  Recall that there exist unique $\alpha \in R^\times$ and $\beta \in R$ such that $\phi (t) = \alpha t + \beta$.  Now, let $z_0 \in R$ and $\phi_{z_0}$ be the unique automorphism of $R[t]$ that satisfies $\phi_{z_0}(t) = \alpha t + ((1-\alpha)z_0 + \beta)$.  Define a function $\zeta_0 \colon \Aut(I_f) \to \Aut (I_{f_{z_0}})$ by setting $\zeta_0 (\varphi) = \phi_{z_0} \arrowvert_{I_{f_{z_0}}}$.  We finish this subsection by using Lemmas~\ref{lem:iso=bij} and \ref{lem:perm.shift} to show that $\zeta_0$ is in fact an isomorphism of groups.  This result will be important in the proofs of Lemma~\ref{lem:beta=0} and Theorem~\ref{thm:main}.

\begin{proposition} \label{prop:aut.shift}
For any $z_0 \in R$ and any nonconstant monic polynomial $f \in R[t]$, the function $\zeta_0 \colon \Aut(I_f) \to \Aut (I_{f_{z_0}})$ is an isomorphism of groups.
\end{proposition}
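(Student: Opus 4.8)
The plan is to realize $\zeta_0$ as a composition of group isomorphisms that have already been constructed, so that no direct verification of the homomorphism property or of well-definedness is needed. Concretely, Lemma~\ref{lem:iso=bij} gives a group isomorphism $\Aut(I_f)\xrightarrow{\ \sim\ }\Gamma_R(f)$, $\varphi\mapsto\sigma_\varphi$, and likewise a group isomorphism $\Aut(I_{f_{z_0}})\xrightarrow{\ \sim\ }\Gamma_R(f_{z_0})$, $\psi\mapsto\sigma_\psi$, whose inverse is $\sigma\mapsto\varphi_\sigma$. Lemma~\ref{lem:perm.shift} gives a group isomorphism $\Gamma_R(f)\xrightarrow{\ \sim\ }\Gamma_R(f_{z_0})$, $\sigma\mapsto\sigma_{z_0}$. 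I would then show that
\[
\zeta_0(\varphi)=\varphi_{(\sigma_\varphi)_{z_0}}
\qquad\text{for all }\varphi\in\Aut(I_f),
\]
i.e.\ that $\zeta_0$ equals the composition of these three isomorphisms; since a composition of group isomorphisms is a group isomorphism, this finishes the proof.

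The one thing to check is the displayed identification, and this is a bookkeeping comparison of the parameters $\alpha,\beta$ appearing in the various constructions. Fix $\varphi\in\Aut(I_f)$ and let $\phi=\iota_f(\varphi)$, $\phi(t)=\alpha t+\beta$ with $\alpha\in R^\times$, $\beta\in R$. By the construction in the proof of Lemma~\ref{lem:iso=bij}, $\sigma_\varphi(z)=\frac{1}{\alpha}(z-\beta)$ for $z\in Z(f)$. By the computation in the proof of Lemma~\ref{lem:perm.shift}, $(\sigma_\varphi)_{z_0}$ is the element of $\Gamma_R(f_{z_0})$ with $(\sigma_\varphi)_{z_0}(z)=\frac{1}{\alpha}(z-\beta')$, where $\beta'=(1-\alpha)z_0+\beta\in R$. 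Applying the inverse bijection from Lemma~\ref{lem:iso=bij} to $f_{z_0}$, the automorphism $\varphi_{(\sigma_\varphi)_{z_0}}$ is, by definition, the restriction to $I_{f_{z_0}}$ of the unique automorphism of $R[t]$ sending $t$ to $\alpha t+\beta'=\alpha t+((1-\alpha)z_0+\beta)$. But that automorphism is exactly $\phi_{z_0}$ from the definition of $\zeta_0$, so $\varphi_{(\sigma_\varphi)_{z_0}}=\phi_{z_0}\arrowvert_{I_{f_{z_0}}}=\zeta_0(\varphi)$, as claimed. In particular this also shows $\zeta_0$ is well defined, i.e.\ lands in $\Aut(I_{f_{z_0}})$, since $\varphi_{(\sigma_\varphi)_{z_0}}\in\iso_R(I_{f_{z_0}},I_{f_{z_0}})$ by Lemma~\ref{lem:iso=bij}.

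There is no real obstacle here beyond keeping the indices straight: the substance of the statement has already been done in Lemmas~\ref{lem:iso=bij} and \ref{lem:perm.shift}, and the mild annoyance is only to confirm that the specific affine parameter $(1-\alpha)z_0+\beta$ hard-coded into the definition of $\phi_{z_0}$ is precisely the one produced by the shift map $\sigma\mapsto\sigma_{z_0}$ on the level of $\Gamma_R$. Once that matching is observed, the result is immediate.
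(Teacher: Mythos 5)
Your proposal is correct and follows essentially the same route as the paper: composing the isomorphism $\Aut(I_f)\to\Gamma_R(f)$ from Lemma~\ref{lem:iso=bij}, the shift isomorphism $\Gamma_R(f)\to\Gamma_R(f_{z_0})$ from Lemma~\ref{lem:perm.shift}, and the inverse isomorphism $\Gamma_R(f_{z_0})\to\Aut(I_{f_{z_0}})$, then checking that the resulting affine parameter is $(1-\alpha)z_0+\beta$, which matches the definition of $\zeta_0$. Your explicit bookkeeping of the parameters is, if anything, slightly more detailed than the paper's.
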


\begin{proof}
By Lemma~\ref{lem:iso=bij}, there exists an isomorphism of groups $\Aut (I_f) \to\Gamma_R (f)$.  Namely, for each $\varphi \in \Aut(I_f)$, there exist unique $\alpha \in R^\times$ and $\beta \in R$ such that $\left( \iota_f (\varphi) \right) (t) = \alpha t + \beta$.  The isomorphism $\Aut (I_f) \to\Gamma_R (f)$ assigns to $\varphi \in \Aut (I_f)$ the permutation $\sigma_\varphi \in \Gamma_R (f)$ given by $\sigma_\varphi (z) = \frac{1}{\alpha} (z - \beta)$ for all $z \in Z(f)$.

By Lemma~\ref{lem:perm.shift}, there exists an isomorphism of groups $\Gamma_R (f) \to \Gamma_R (f_{z_0})$ which assigns to a permutation $\sigma \in \Gamma_R (f)$ the permutation $\sigma_{z_0} \in \Gamma_R (f_{z_0})$ given by $\sigma_{z_0} (z) = \frac{1}{\alpha} (z - ((1-\alpha)z_0 + \beta))$ for all $z \in Z(f_{z_0})$.

Since $f_{z_0} \in R[t]$, by Lemma~\ref{lem:iso=bij}, there exists an isomorphism of groups $\Gamma_R (f_{z_0}) \to \Aut (I_{f_{z_0}})$.  Namely, for each permutation $\sigma \in \Gamma_R (f_{z_0})$, there exist unique $\alpha \in R^\times$ and $\beta \in R$ such that $\sigma (z) = \frac{1}{\alpha} (z - \beta)$ for all $z \in Z (f_{z_0})$.  The isomorphism $\Gamma_R (f_{z_0}) \to \Aut (I_{f_{z_0}})$ assigns to $\sigma$ the automorphism $\varphi_\sigma = \psi \arrowvert_{I_{f_{z_0}}} \in \Aut (I_{f_{z_0}})$, where $\psi$ is the unique automorphism of $R[t]$ such that $\psi(t) = \alpha t + \beta$.

Composing these three isomorphisms of groups, we obtain an isomorphism $\Aut(I_f) \to \Aut (I_{f_{z_0}})$ that assigns to each $\varphi \in \Aut(I_f)$ the automorphism $\zeta_0 (\varphi) \in \Aut(I_{f_{z_0}})$.
\end{proof}

\subsection{Main results} \label{ss:mains}

In this subsection we will prove the main results of the paper:  we completely describe the automorphism group of $I_f$ for any nonconstant monic polynomial $f \in R[t]$.  We separate our analysis in two main cases and begin by describing the automorphism groups of ideals generated by polynomials with only one root in $\bb K$.

\begin{proposition} \label{prop:main0}
If $f(t) = (t-a)^n$ for some $a \in R$ and $n \ge 1$, then there exists an isomorphism of groups between $\Aut (I_f)$ and $R^\times$.
\end{proposition}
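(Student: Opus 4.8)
The plan is to use the machinery from Section~\ref{ss:perms} to reduce the problem to a concrete computation with permutations. By Proposition~\ref{prop:aut.shift}, applied with $z_0 = a$, we have $\Aut(I_f) \cong \Aut(I_{f_{-a}})$, where $f_{-a}(t) = f(t+a) = t^n$. Hence it suffices to prove that $\Aut(I_{t^n}) \cong R^\times$. By Lemma~\ref{lem:iso=bij}, $\Aut(I_{t^n}) \cong \Gamma_R(t^n)$, so the goal becomes: show $\Gamma_R(t^n) \cong R^\times$.

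Now $Z(t^n) = \{0\}$, a single point, so $\sym Z(t^n)$ is the trivial group and every element of $\Gamma_R(t^n)$ is the identity map on $\{0\}$. The content is therefore not in the permutation itself, but in the data $(\alpha,\beta)$ witnessing condition~(a) of the definition of $\Gamma_R$: recall that, under the isomorphism of Lemma~\ref{lem:iso=bij}, an automorphism $\varphi \in \Aut(I_{t^n})$ corresponds to the pair $(\alpha,\beta) \in R^\times \times R$ where $\iota_f(\varphi)(t) = \alpha t + \beta$, subject to the constraint (coming from Proposition~\ref{prop:lambda}) that $\phi(t^n) = (\alpha t + \beta)^n = \alpha^n t^n$. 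Expanding $(\alpha t + \beta)^n$ and comparing coefficients, the coefficient of $t^{n-1}$ is $n\alpha^{n-1}\beta$, which must vanish; more simply, the only way $(\alpha t+\beta)^n$ is a scalar multiple of $t^n$ in the domain $R[t]$ is to have $\beta = 0$ (since $\alpha t + \beta$ would otherwise have a nonzero root $-\beta/\alpha$, while $\alpha^n t^n$ has only $0$ as a root). Thus the admissible pairs are exactly $(\alpha,0)$ with $\alpha \in R^\times$, and $\phi(t) = \alpha t$ indeed satisfies $\phi(t^n) = \alpha^n t^n$, so by Proposition~\ref{prop:lambda} each such $\phi$ restricts to an automorphism of $I_{t^n}$.

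It remains to check that the resulting bijection $\Aut(I_{t^n}) \to R^\times$, $\varphi \mapsto \alpha$, is a group homomorphism. This is immediate from Lemma~\ref{lem:comp.exts} (or from the homomorphism statement in Lemma~\ref{lem:iso=bij}): if $\varphi_1,\varphi_2$ correspond to $\alpha_1,\alpha_2$, then $\iota_f(\varphi_1\circ\varphi_2)(t) = \iota_f(\varphi_1)(\iota_f(\varphi_2)(t)) = \alpha_1\alpha_2\, t$, so the composite corresponds to $\alpha_1\alpha_2$. This gives the desired isomorphism $\Aut(I_f) \cong \Aut(I_{t^n}) \cong R^\times$.

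I do not anticipate a serious obstacle: the only point requiring care is the argument that $\beta = 0$ is forced, which relies on $R$ (hence $R[t]$) being a domain so that $(\alpha t+\beta)^n = \alpha^n t^n$ has the evident consequence on roots; one could alternatively run the coefficient comparison directly, but then a little care is needed if $n$ is not invertible in $R$ (e.g. in positive characteristic), which is precisely why the root-counting argument is cleaner. Everything else is bookkeeping already packaged in the lemmas of Section~\ref{ss:perms}.
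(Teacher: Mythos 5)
Your argument is essentially sound and, in substance, close to the paper's: the paper works directly with $f=(t-a)^n$, extends $\varphi$ to $\phi(t)=\alpha t+\beta$, uses Proposition~\ref{prop:HW} and a comparison of roots to force $\beta=(1-\alpha)a$, and checks that every $\alpha\in R^\times$ occurs and that the correspondence is multiplicative. You insert one extra reduction, via Proposition~\ref{prop:aut.shift}, to translate the root to the origin so that the constraint becomes $\beta=0$; this is legitimate and arguably tidies the bookkeeping (note only that with the paper's convention $f_{z_0}(t)=f(t-z_0)$ you want $z_0=-a$, not $z_0=a$, to land on $t^n$). Your determination of $\beta=0$ by comparing roots of $(\alpha t+\beta)^n$ and $\alpha^n t^n$ in $\bb K[t]$ is exactly the right move, and your remark that the naive coefficient comparison $n\alpha^{n-1}\beta=0$ fails when $n$ vanishes in $R$ is a point the paper itself glosses over (it asserts ``the roots \dots are the same'' and concludes, which is your root-counting argument). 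The converse direction and the homomorphism check via Lemma~\ref{lem:comp.exts} match the paper.

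The one step you should excise is the appeal to Lemma~\ref{lem:iso=bij}. As defined, $\Gamma_R(t^n)$ is a subgroup of $\sym Z(t^n)=\sym\{0\}$, hence the trivial group, so the assertion ``the goal becomes: show $\Gamma_R(t^n)\cong R^\times$'' is false on its face; more to the point, the bijection of Lemma~\ref{lem:iso=bij} degenerates when $|Z(f)|=1$, because the pair $(\alpha,\beta)$ is not determined by the induced map on a one-point set, so the assignment $\sigma\mapsto\varphi_\sigma$ is not well defined there. This is precisely why the paper handles the one-root case separately and never invokes that lemma for it. You in fact notice the collapse and pivot to tracking the pair $(\alpha,\beta)$ directly via Corollary~\ref{cor:HW} and Proposition~\ref{prop:lambda}; that direct argument is what actually proves the statement and it is complete, so simply delete the sentence routing through $\Gamma_R(t^n)$ and let the rest stand.
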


\begin{proof}
Suppose $\varphi \in \Aut(I_f)$, denote $\iota_f(\varphi) \in \Aut (R[t])$ by $\phi$, and let $\alpha \in R^\times$ and $\beta \in R$ be such that $\phi(t) = \alpha t + \beta$.  By Proposition~\ref{prop:HW}, there exists $\lambda \in \bbk^\times$ such that
\[
\lambda (t - a)^n
= \phi (f(t))
= (\alpha t + (\beta - a))^n
= \alpha^n \left( t - \frac{a - \beta}{\alpha} \right)^n.
\]
Since the roots of the polynomials on the left and right sides of this equation are the same, $\beta$ must be equal to $(1-\alpha)a$.

Now, given $\alpha \in R^\times$, let $\phi_\alpha$ be the unique automorphism of $R[t]$ satisfying $\phi_\alpha (t) = \alpha t + (1-\alpha)a$.  Notice that
\[
\phi_\alpha (f(t))
= (\alpha t + (1-\alpha)a - a )^n
= \alpha^n (t-a)^n
= \alpha^n f(t).
\]
Thus, $\phi_\alpha \arrowvert_{I_f}$ is an automorphism of $I_f$.  This shows that there exists a bijection between $R^\times$ and $\Aut (I_f)$, explicitly given by $\alpha \mapsto \phi_\alpha \arrowvert_{I_f}$.  To finish the proof, we will show that this bijection is in fact an isomorphism of groups.  For any $\alpha_1, \alpha_2 \in R^\times$, we have
\begin{align*}
\phi_{\alpha_1} (\phi_{\alpha_2} (t))
&= \phi_{\alpha_1} (\alpha_2 t + (1 - \alpha_2) a) \\
&= \alpha_2 (\alpha_1 t + (1 - \alpha_1) a ) + (1 - \alpha_2) a \\
&= \alpha_1 \alpha_2 t + (1 - \alpha_1 \alpha_2) a \\
&= \phi_{\alpha_1 \alpha_2} (t).
\end{align*}
This implies that $\phi_{\alpha_1} \arrowvert_{I_f} \circ \phi_{\alpha_2} \arrowvert_{I_f} = \phi_{\alpha_1 \alpha_2} \arrowvert_{I_f}$, and finishes the proof.
\end{proof}

For the rest of this section, we will analyze the case of polynomials with at least two distinct roots in $\bb K$.  In the next result, we use Proposition~\ref{prop:HW} to obtain restrictions on the coefficients $\alpha$ and $\beta$ that describe the extensions to $R[t]$ of automorphisms of $I_f$ obtained in Corollary~\ref{cor:HW}.

\begin{lemma} \label{lem:beta=0}
Let $f \in R [t]$ be decomposed as $f(t) = (t-a_1)^{m_1} \dotsm (t-a_r)^{m_r}$, with $r, m_1, \dotsc, m_r > 0$ and $a_1, \dotsc, a_r \in \bb K$ distinct. Let $n = m_1 + \dotsb + m_r > 1$ denote the degree of $f$, let $\varphi \in \Aut(I_f)$, let $\phi$ denote $\iota_f (\varphi) \in \Aut (R[t])$, and let $\alpha \in R^\times$ and $\beta \in R$ be such that $\phi(t) = \alpha t + \beta$.  Then
\[
n \beta = (1-\alpha)(m_1a_1 + \dotsb + m_ra_r).
\]
Moreover, if $f$ has at least two distinct roots in $\bb K$, $n$ is nonzero in $R$, and there exists $z_0 \in R$ such that $n z_0 = -(m_1a_1 + \dotsb + m_ra_r)$, then $\alpha$ is an $(n-m_0)$-th root of unity, where $m_0$ is the multiplicity of $-z_0$ as a root of $f$.
\end{lemma}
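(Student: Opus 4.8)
The first identity is a direct consequence of Proposition~\ref{prop:HW}. Since $\varphi\in\Aut(I_f)$ extends to $\phi\in\Aut(R[t])$ with $\phi(t)=\alpha t+\beta$, Proposition~\ref{prop:HW} gives $\phi(f)=\lambda f$ for some $\lambda\in R^\times$; comparing leading coefficients (as in Proposition~\ref{prop:lambda}) forces $\lambda=\alpha^n$. Now I would simply compare the coefficient of $t^{n-1}$ on both sides of $\alpha^n f(t)=f(\alpha t+\beta)=\prod_i(\alpha t+(\beta-a_i))^{m_i}$. The left side has that coefficient equal to $\alpha^n\cdot(-(m_1a_1+\dots+m_ra_r))$, while expanding the right side gives $\alpha^{n-1}\cdot\bigl(\sum_i m_i(\beta-a_i)\bigr)=\alpha^{n-1}\bigl(n\beta-(m_1a_1+\dots+m_ra_r)\bigr)$. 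Equating and dividing by $\alpha^{n-1}\in R^\times$ yields $-\alpha(m_1a_1+\dots+m_ra_r)=n\beta-(m_1a_1+\dots+m_ra_r)$, which rearranges to the claimed identity $n\beta=(1-\alpha)(m_1a_1+\dots+m_ra_r)$. (Alternatively, one can phrase this in terms of $\sum_z z$ running over roots with multiplicity.)

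For the second assertion, the plan is to reduce to the centered case via Proposition~\ref{prop:aut.shift}. Set $S=m_1a_1+\dots+m_ra_r$; by hypothesis there is $z_0\in R$ with $nz_0=-S$. Replacing $f$ by $f_{z_0}(t)=f(t-z_0)$, Proposition~\ref{prop:aut.shift} gives $\Aut(I_f)\cong\Aut(I_{f_{z_0}})$, and under this isomorphism $\varphi$ corresponds to the automorphism whose extension sends $t\mapsto\alpha t+\beta'$ with the same multiplier $\alpha$. The roots of $f_{z_0}$ are the $a_i+z_0$ with the same multiplicities, so the weighted sum of roots of $f_{z_0}$ is $S+nz_0=0$. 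Applying the first identity now to $f_{z_0}$ (whose degree is still $n>1$, nonzero in $R$) gives $n\beta'=(1-\alpha)\cdot 0=0$, and since $n$ is nonzero in the integral domain $R$ we conclude $\beta'=0$. Thus the extension of the corresponding automorphism is $t\mapsto\alpha t$.

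Finally I would extract the root-of-unity condition from $\phi_{z_0}(f_{z_0})=\alpha^n f_{z_0}$ together with $\phi_{z_0}(t)=\alpha t$. Write $f_{z_0}(t)=t^{m_0}g(t)$, where $m_0$ is the multiplicity of $0$ as a root of $f_{z_0}$ — that is, the multiplicity of $-z_0$ as a root of $f$ — and $g(0)\neq0$, $\deg g=n-m_0$. Then $\phi_{z_0}(f_{z_0})(t)=(\alpha t)^{m_0}g(\alpha t)=\alpha^{m_0}t^{m_0}g(\alpha t)$, so the equation $\alpha^{m_0}t^{m_0}g(\alpha t)=\alpha^n t^{m_0}g(t)$ gives $g(\alpha t)=\alpha^{n-m_0}g(t)$ in $R[t]$. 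Comparing constant terms and using $g(0)\neq 0$ (it lies in $R\setminus\{0\}$, and $R$ is a domain) yields $\alpha^{n-m_0}=1$, as desired. I expect the main point requiring care is the bookkeeping that the multiplier $\alpha$ is genuinely unchanged by the shift of Proposition~\ref{prop:aut.shift} — this is read off directly from the formula $\phi_{z_0}(t)=\alpha t+((1-\alpha)z_0+\beta)$ defining $\zeta_0$ in that proposition — and, as a minor point, the need to invoke that $f$ has at least two distinct roots to guarantee $m_0<n$ so that $n-m_0>0$ and the statement is non-vacuous.
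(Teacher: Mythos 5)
Your proposal is correct and follows essentially the same route as the paper's proof: the first identity by comparing the $t^{n-1}$ coefficients of $\alpha^n f(t)$ and $f(\alpha t+\beta)$, and the second by shifting to $f_{z_0}$ via Proposition~\ref{prop:aut.shift} (with the same multiplier $\alpha$), deducing $\beta'=0$, and then extracting $\alpha^{n-m_0}=1$ from the lowest nonvanishing coefficient. Your factorization $f_{z_0}=t^{m_0}g(t)$ is just a repackaging of the paper's direct comparison of the $t^{m_0}$ coefficients, and your closing remarks on why $\alpha$ is unchanged by the shift and why two distinct roots force $m_0<n$ match the points the paper itself relies on.
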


\begin{proof}
From Proposition~\ref{prop:HW}, we know that there exists $\lambda \in R^\times$ such that $\varphi(f) = \lambda f$.  Since $\varphi$ is a restriction of $\phi$, we have on the one hand
\begin{align} \label{eq:lambdaf1}
\varphi(f(t))
&= (\alpha t + (\beta-a_1))^{m_1} \dotsm (\alpha t + (\beta-a_r))^{m_r} \nonumber \\
&= (\alpha t)^n + (n \beta-(m_1a_1+\dotsb+m_ra_r)) (\alpha t)^{n-1} + \dotsb + f(\beta),
\end{align}
and on the other hand
\begin{equation} \label{eq:lambdaf2}
\lambda f(t)
= \lambda t^n - \lambda (m_1a_1 + \dotsb + m_ra_r) t^{n-1} + \dotsb + (-1)^n\lambda a_1^{m_1} \dotsm a_r^{m_r}.
\end{equation}
By comparing the terms on the right sides of equations~\eqref{eq:lambdaf1} and \eqref{eq:lambdaf2}, we obtain that
\begin{equation} \label{eq:rels1}
\alpha^n = \lambda
\quad \textup{and} \quad
n\beta = (1-\alpha)(m_1a_1+\dotsb+m_ra_r).
\end{equation}
This proves the first part of the statement.

To prove the second part, let $z_0 \in R$ be such that $n z_0 = -(m_1a_1 + \dotsb + m_ra_r)$, and consider $f_{z_0} \in R[t]$.  Let $\psi \in \Aut(I_{f_{z_0}})$, let $\tilde\psi$ denote $\iota_f (\psi) \in \Aut (R[t])$, and let $\tilde\alpha \in R^\times$ and $\tilde\beta \in R$ be the unique elements such that $\tilde\psi(t) = \tilde\alpha t + \tilde\beta$.  Notice that by Proposition~\ref{prop:aut.shift}, $\tilde{\alpha}=\alpha$.  Also notice that $f_{z_0}$ is decomposed as $f_{z_0}(t) = (t-(a_1+z_0))^{m_1} \dotsm (t-(a_r+z_0))^{m_r}$, with $r>1$, $m_1, \dotsc, m_r > 0$, $(a_1+z_0), \dotsc, (a_r+z_0) \in \bb K$ distinct, and $\deg (f_{z_0}) = n$.  As in the first part of the proof, there exists $\tilde\lambda \in R^\times$ such that
\begin{equation} \label{eq:rel'}
\psi(f_{z_0})= \tilde\lambda f_{z_0},
\quad
\alpha^n = \tilde\lambda
\quad \textup{and} \quad
n\tilde\beta = (1-\alpha)(m_1(a_1+z_0) + \dotsb + m_r(a_r+z_0)).
\end{equation}
Together with equation~\eqref{eq:rels1}, equation~\eqref{eq:rel'} implies, in particular, that $\tilde\lambda = \lambda$.  Moreover, notice that $m_1(a_1+z_0) + \dotsb + m_r(a_r+z_0) = (m_1a_1 + \dotsb + m_ra_r) + n z_0 = 0$.  Since $R$ is assumed to be a domain and $n$ is assumed to be nonzero in $R$, equation~\eqref{eq:rel'} implies that $\tilde\beta = 0$.

Denote by $m_0 \ge 0$ the multiplicity of $0$ as a root of $f_{z_0}$ (or, equivalently, the multiplicity of $-z_0$ as a root of $f$).  Since $f$ is assumed to have at least two distinct roots in $\bb K$, then $f_{z_0}$ has at least one nonzero root in $\bb K$.  Hence, equation~\eqref{eq:lambdaf1} for $f_{z_0}$ becomes
\begin{equation} \label{eq:lambdaf3}
\psi(f_{z_0}(t))
= (\alpha t)^n + \dotsb + (-1)^{n-m_0} \prod_{a_i + z_0 \neq 0} (a_i + z_0)^{m_i} (\alpha t)^{m_0},
\end{equation}
and equation~\eqref{eq:lambdaf2} becomes
\begin{equation} \label{eq:lambdaf4}
\lambda f_{z_0}(t)
= \lambda t^n + \dotsb + (-1)^{n-m_0} \lambda \prod_{a_i + z_0 \neq 0} (a_i + z_0)^{m_i} t^{m_0}.
\end{equation}
Since $R$ is assumed to be a domain, the coefficients of $t^{m_0}$ on equations~\eqref{eq:lambdaf3} and \eqref{eq:lambdaf4} are nonzero.  By comparing them, we obtain that $\alpha^{m_0} = \lambda = \alpha^n$.  Since $\alpha \in R^\times$, then $\alpha^{n-m_0}=1$, that is, $\alpha$ is an $(n-m_0) $-th root of unity.
\end{proof}

Using Proposition~\ref{prop:aut.shift} and Lemma~\ref{lem:beta=0}, we are now able to show that $\Aut (I_f)$ is a finite cyclic group (thus abelian) when $f$ has at least two distinct roots in $\bb K$.

\begin{theorem} \label{thm:main}
Let $f$ be a nonconstant monic polynomial in $R[t]$, decomposed as
\[
f(t) = (t-a_1)^{m_1} \dotsm (t-a_r)^{m_r},
\]
with $r > 1$, $m_1, \dotsc, m_r > 0$ and $a_1, \dotsc, a_r$ being its distinct roots in $\bb K$. Let $n=m_1+\cdots+m_r>1$ denote the degree of $f$.  If there exists $z_0 \in R$ such that $n z_0 = (m_1a_1 + \dotsb + m_ra_r)$, then $\Aut(I_f)$ is isomorphic to a cyclic subgroup of $\Aut(R[t])$ of order $(n-m_0)$, where $m_0 \ge 0$ denotes the multiplicity of $-z_0$ as a root of $f$.  In particular, $\Aut(I_f)$ is cyclic.
\end{theorem}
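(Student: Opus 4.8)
The plan is to realize $\Aut(I_f)$ concretely as a subgroup of the multiplicative group $R^\times$ whose image consists of roots of unity; since every finite subgroup of the multiplicative group of a field is cyclic, this forces $\Aut(I_f)$ to be cyclic, and the root-of-unity bound controls its order.

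First I would recall from Corollary~\ref{cor:HW} that $\iota_f\colon \Aut(I_f)\hookrightarrow \Aut(R[t])$ is an injective homomorphism of groups, so that every $\varphi\in\Aut(I_f)$ is recorded by the pair $(\alpha_\varphi,\beta_\varphi)\in R^\times\times R$ with $\iota_f(\varphi)(t)=\alpha_\varphi t+\beta_\varphi$. The first assertion of Lemma~\ref{lem:beta=0} gives $n\beta_\varphi=(1-\alpha_\varphi)(m_1a_1+\dotsb+m_ra_r)=(1-\alpha_\varphi)\,nz_0$; since $R$ is a domain and $n$ is nonzero in $R$ (which is in any case needed for $z_0$, hence $m_0$, to be well defined), cancelling $n$ gives $\beta_\varphi=(1-\alpha_\varphi)z_0$. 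Thus $\varphi$ is already determined by $\alpha_\varphi$, so the assignment $\varphi\mapsto\alpha_\varphi$ is an injective map $\Aut(I_f)\to R^\times$; moreover it is a homomorphism, because composing the affine maps $t\mapsto\alpha_1t+\beta_1$ and $t\mapsto\alpha_2t+\beta_2$ yields an affine map with multiplicative part $\alpha_1\alpha_2$, together with the fact that $\iota_f$ is a homomorphism.

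Next I would feed the situation into the second assertion of Lemma~\ref{lem:beta=0}: since $f$ has at least two distinct roots in $\bb K$ and the element $z_0\in R$ is available, every $\alpha_\varphi$ is an $(n-m_0)$-th root of unity. Hence the image of $\Aut(I_f)$ in $R^\times$ lies in the group $\mu$ of $(n-m_0)$-th roots of unity of $R$, which is a finite subgroup of $\bbk^\times$ (recall $R\subseteq\bbk$) and therefore cyclic; so $\Aut(I_f)$, being isomorphic to a subgroup of $\mu$, is cyclic. Transporting back along $\iota_f$, it is isomorphic to a cyclic subgroup of $\Aut(R[t])$ --- concretely, to a subgroup of the cyclic group generated by $t\mapsto\zeta t+(1-\zeta)z_0$ for a generator $\zeta$ of $\mu$ --- whose order divides $n-m_0$.

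Essentially all the substance is packaged in Lemma~\ref{lem:beta=0} (the reduction, via Proposition~\ref{prop:aut.shift}, to a translate of $f$ whose roots have weighted sum zero, and the constraint that $\alpha$ be an $(n-m_0)$-th root of unity); what remains here is bookkeeping. The one step that genuinely needs care is that $\varphi$ is recovered from $\alpha_\varphi$ alone, which is exactly where one uses that $R$ is a domain with $n\neq0$; after that one only invokes the classical fact that a finite subgroup of the multiplicative group of a field is cyclic, and one must match the sign convention in the hypothesis $nz_0=m_1a_1+\dotsb+m_ra_r$ with that of Lemma~\ref{lem:beta=0}. Determining which of the $(n-m_0)$-th roots of unity are actually attained as some $\alpha_\varphi$, and hence the exact order of $\Aut(I_f)$, is the remaining subtle point, resolved by the finer analysis of the roots of $f$.
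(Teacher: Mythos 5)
Your proof is correct and follows essentially the same route as the paper's: all the substance sits in Lemma~\ref{lem:beta=0}, and you merely repackage the reduction by solving $n\beta_\varphi=(1-\alpha_\varphi)nz_0$ for $\beta_\varphi$ directly instead of first invoking Proposition~\ref{prop:aut.shift} to shift to $z_0=0$, after which $\varphi\mapsto\alpha_\varphi$ is an injective homomorphism into the group of $(n-m_0)$-th roots of unity exactly as in the paper. Your closing caveats are apt: the paper's own proof likewise only establishes that $\Aut(I_f)$ embeds into a cyclic group of order $n-m_0$ (so its order divides $n-m_0$), and the sign of $z_0$ in the theorem's hypothesis does need to be reconciled with the convention of Lemma~\ref{lem:beta=0}.
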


\begin{proof}
If there exists $z_0 \in R$ such that $n z_0 = (m_1a_1 + \dotsb + m_ra_r)$, then by Proposition~\ref{prop:aut.shift}, $\Aut (I_f)$ is isomorphic to $\Aut (I_{f_{z_0}})$.  So without loss of generality, we will assume that $z_0 = 0$.  Since $f$ has at least two different roots in $\bb K$, at least one of these roots is nonzero.  Then by Lemma~\ref{lem:beta=0}, for any $\varphi \in \Aut(I_f)$, there exists an $(n-m_0)$-th root of unity, $\alpha_\varphi$, such that $\varphi = \phi \arrowvert_{I_f}$ where $\phi \in \Aut (R[t])$ satisfies $\phi(t)=\alpha_\varphi t$.

We will show that the function $\varphi \mapsto \alpha_\varphi$ is an injective homomorphism of groups.  If $\alpha_{\varphi_1} = \alpha_{\varphi_2}$, then $\varphi_1 = \iota_f (\phi_1) \arrowvert_{I_f}$, $\varphi_2 = \iota_f (\phi_2) \arrowvert_{I_f}$ and $\iota_f \left( \varphi_1 \right) (t) = \alpha_{\varphi_1} t = \alpha_{\varphi_2} t = \iota_f \left( \varphi_2 \right) (t)$.  This proves that the function $\varphi \mapsto \alpha_\varphi$ is injective.  Now, if $\varphi, \psi \in \Aut(I_f)$, then $\alpha_{\varphi\circ\psi} t = \varphi \circ \psi (t) = \varphi (\alpha_\psi t) = \alpha_\varphi \alpha_\psi t$. Hence $\alpha_{\varphi\circ\psi}=\alpha_\varphi \alpha_\psi$.  This shows that $\varphi \mapsto \alpha_\varphi$ is homomorphism of groups.  Moreover, by Lemma~\ref{lem:beta=0}, $\Aut(I_f)$ is isomorphic to a subgroup of the cyclic group of order $(n-m_0)$.
\end{proof}

\begin{remark}
In the case where $R$ is a field, if the characteristic of $R$ is zero, or if the characteristic of $R$ is positive and does not divide the degree of $f \in R[t]$, then the result of Theorem~\ref{thm:main} is valid for every polynomial $f$ with at least two distinct roots in $\bb K$.  However, the result of Theorem~\ref{thm:main} may not remain valid when the characteristic of $R$ divides the degree of $f$; that is, when $\deg (f)$ is zero in $R$.  For instance, if $R = \bb Z / 3 \bb Z$ and $f (t) = t^3 - t$, then one can check that $\Aut (I_f) \cong \Aut (R[t])$, which is isomorphic to the permutation group in three letters, thus not a cyclic group.
\end{remark}

One can visualize the result of Theorem~\ref{thm:main} in the following way.  Let $R = \bb R$ be the field of real numbers, and $f$ be a nonconstant polynomial in $\bb R [t]$ with two or more distinct roots, possibly in $\bb K = \bb C$, the field of complex numbers.  Then each nontrivial automorphism of $I_f$ is induced by a rotation of the complex plane that fixes the point $z_0$, which is the average of the roots of $f$ (counted with multiplicities).

For each nonconstant monic polynomial $f \in R[t]$ and each $m > 0$, denote by $Z_m (f)$ the subset of $Z(f)$ consisting of roots of $f$ with multiplicity $m$ and by $f_m(t)$ the polynomial in $\bb K [t]$ defined by
\[
f_m(t)=
\begin{cases}
\prod_{a \in Z_m(f)} (t-a), & \textup{if $Z_m (f) \ne \emptyset$}; \\
1, & \textup{otherwise}.
\end{cases}
\]
Notice that $Z_m (f) = \emptyset$ for all $m > \deg(f)$, and that $f(t) = \prod_{m>0} (f_m(t))^m$.

Notice that, if $m>0$ is such that $Z_m(f) = \emptyset$, then $I_{f_m} = R[t]$.  Recall from Corollary~\ref{cor:HW} that, for any nonconstant monic polynomial $f \in R[t]$, there exists an injective homomorphism of groups $\iota_f \colon \Aut (I_f) \inj \Aut (R[t])$.  For each $m > 0$, let $\iota_m \colon \Aut(I_{f_m}) \inj \Aut(R[t])$ be defined by
\[
\iota_m (\varphi) =
\begin{cases}
\iota_{f_m}(\varphi), & \textup{if $Z_m (f) \ne \emptyset$}; \\
{\rm id}_{R[t]}, & \textup{otherwise}.
\end{cases}
\]

In the next result we will establish a relation between the automorphism groups of $I_{f_m}$ and the automorphism group of $I_f$.

\begin{theorem} \label{thm:main2}
Let $f \in R[t]$ be a nonconstant monic polynomial.  Then $\iota_f$ induces an isomorphism of groups between $\Aut(I_f)$ and $\bigcap_{m>0} \iota_m \left( \Aut(I_{f_m}) \right)$.
\end{theorem}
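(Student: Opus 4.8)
The plan is to show that, under the identification $\iota_f \colon \Aut(I_f) \inj \Aut(R[t])$, the image of $\Aut(I_f)$ is precisely the intersection $\bigcap_{m>0} \iota_m(\Aut(I_{f_m}))$, so that $\iota_f$ corestricts to the desired isomorphism. Since the index set is effectively finite (only finitely many $m \le \deg(f)$ have $Z_m(f) \ne \emptyset$, and the remaining factors contribute all of $\Aut(R[t])$), the intersection is really over those $m$ with $Z_m(f) \ne \emptyset$. The key fact I will use throughout is Proposition~\ref{prop:lambda}: an automorphism $\phi$ of $R[t]$ with $\phi(t) = \alpha t + \beta$ restricts to an automorphism of $I_h$ (for $h$ nonconstant monic) if and only if $\phi(h) = \alpha^{\deg h} h$.

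\textbf{Step 1: the image of $\iota_f$ is contained in the intersection.}
Let $\varphi \in \Aut(I_f)$ and $\phi = \iota_f(\varphi)$, with $\phi(t) = \alpha t + \beta$, $\alpha \in R^\times$, $\beta \in R$. By Proposition~\ref{prop:lambda}, $\phi(f) = \alpha^{n} f$ where $n = \deg(f)$. Now apply $\phi$ to the factorization $f = \prod_{m>0} (f_m)^m$. Since $\phi$ is a ring homomorphism, $\alpha^{n} \prod_m (f_m)^m = \prod_m \phi(f_m)^m$. Each $\phi(f_m)$ has degree $\deg(f_m)$ (as $\phi$ is linear in $t$ with unit leading coefficient), and $\phi(f_m)$ has roots exactly $\{\frac{1}{\alpha}(a - \beta) : a \in Z_m(f)\}$, which are distinct. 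By uniqueness of factorization in $\bbk[t]$ into the product of the "multiplicity strata'' (the $f_m$ are pairwise coprime and squarefree), comparing both sides forces $\phi(f_m) = \alpha^{\deg(f_m)} f_m$ for each $m$ with $Z_m(f) \ne \emptyset$; the scalar is pinned down by comparing leading coefficients and using $\sum_m m \deg(f_m) = n$. Actually I should be slightly careful: a priori $\phi$ could permute the roots between different strata, but it cannot, because $\phi$ preserves multiplicities of roots of $f$ — that is exactly what $\phi(f) = \alpha^n f$ says — so $\phi$ maps $Z_m(f)$ bijectively to $Z_m(f)$. Hence $f_m(t) \in R[t]$ is in fact preserved, $\phi(f_m) \in R[t]$, and by Proposition~\ref{prop:lambda} again $\phi$ restricts to an automorphism of $I_{f_m}$; when $Z_m(f) = \emptyset$, $\phi \in \Aut(R[t]) = \iota_m(\Aut(I_{f_m}))$ trivially. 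Thus $\phi \in \bigcap_{m>0} \iota_m(\Aut(I_{f_m}))$. (One point that needs a line of justification: $f_m \in R[t]$, not merely $\bbk[t]$; this holds because the $f_m$ are, up to the monic normalization, obtained from $f$ by gcd operations — $f/\gcd(f,f')$ iterated — which stay in $R[t]$ since $R$ is a domain, or alternatively because $\phi$ permutes $Z_m(f)$ and the elementary symmetric functions land in $R$ by the same argument already used in the proof of Lemma~\ref{lem:iso=bij}.)

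\textbf{Step 2: the intersection is contained in the image of $\iota_f$.}
Conversely, let $\phi \in \Aut(R[t])$ lie in $\bigcap_{m>0} \iota_m(\Aut(I_{f_m}))$, say $\phi(t) = \alpha t + \beta$. For each $m$ with $Z_m(f) \ne \emptyset$, membership in $\iota_m(\Aut(I_{f_m}))$ means, by Proposition~\ref{prop:lambda}, that $\phi(f_m) = \alpha^{\deg(f_m)} f_m$. Multiplying these relations together with exponents $m$ and using $f = \prod_{m>0} (f_m)^m$ gives
\[
\phi(f) = \prod_{m>0} \phi(f_m)^m = \prod_{m>0} \left( \alpha^{\deg(f_m)} f_m \right)^m = \alpha^{\sum_{m>0} m \deg(f_m)} \prod_{m>0} (f_m)^m = \alpha^{n} f,
\]
so by Proposition~\ref{prop:lambda} $\phi$ restricts to an automorphism $\varphi$ of $I_f$, and by uniqueness of extensions (Corollary~\ref{cor:HW}) $\iota_f(\varphi) = \phi$. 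Hence $\phi$ is in the image of $\iota_f$.

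\textbf{Putting it together and the main obstacle.}
Steps 1 and 2 show that $\iota_f$ is a group homomorphism (already known, by Corollary~\ref{cor:HW}) that is injective (also known) with image exactly $\bigcap_{m>0} \iota_m(\Aut(I_{f_m}))$; therefore its corestriction is an isomorphism of groups, which is the assertion. I expect the only genuinely delicate point to be the bookkeeping in Step 1 — verifying that an automorphism $\phi$ of $R[t]$ satisfying $\phi(f) = \alpha^n f$ must individually preserve each squarefree stratum $f_m$ and satisfy $\phi(f_m) = \alpha^{\deg(f_m)} f_m$ (and that $f_m \in R[t]$ to begin with, so that $\iota_m$ is even defined). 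This is essentially a uniqueness-of-factorization argument in $\bbk[t]$ combined with the observation that $\phi$ induces, via $z \mapsto \frac{1}{\alpha}(z-\beta)$, a bijection of $Z(f)$ that respects multiplicities — precisely the permutation $\sigma_\varphi \in \Gamma_R(f)$ of Lemma~\ref{lem:iso=bij}. Once that is in place, everything else is a direct manipulation of the factorization $f = \prod_m (f_m)^m$ through Proposition~\ref{prop:lambda}.
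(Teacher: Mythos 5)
Your proof is correct and follows essentially the same route as the paper's: in one direction you use the fact that $\phi(f)=\alpha^{\deg(f)}f$ forces the multiplicity-preserving permutation of $Z(f)$ to stabilize each stratum $Z_m(f)$, hence $\phi(f_m)=\alpha^{\deg(f_m)}f_m$ and Proposition~\ref{prop:lambda} applies; in the other you multiply these relations over $m$ to recover $\phi(f)=\alpha^{\deg(f)}f$. Your added remark about why $f_m$ lies in $R[t]$ (so that $I_{f_m}$ and $\iota_m$ make sense) addresses a point the paper passes over silently, and is a worthwhile extra line of justification rather than a deviation in method.
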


\begin{proof}
We will first show that $\iota_f (\varphi) \in \bigcap_{m>0} \iota_m \left( \Aut(I_{f_m}) \right)$ for all $\varphi \in \Aut(I_f)$.  Let $\phi \in \Aut(R[t])$ denote $\iota_f(\varphi)$ and recall that there exist unique elements $\alpha \in R^\times$ and $\beta \in R$ such that $\phi(t) = \alpha t + \beta$.  It is clear that $\phi \in \iota_m \left( \Aut (I_{f_m}) \right)$ for all $m > 0$ such that $Z_m (f) = \emptyset$.  So, assume that $Z_m (f) \ne \emptyset$ and notice that, since $f_m$ is monic and nonconstant, the leading coefficient of $\phi(f_m)$ is $r_m \alpha ^{\deg(f_m)}$, where $r_m \in R^\times$ is the leading coefficient of $f_m$.  By Lemma~\ref{lem:aut=perm}, the roots and multiplicities of the roots of $\phi(f)$ are the same as those of $f$.  Thus $\phi(f)_m = \phi(f_m) = \alpha^{\deg(f_m)} f_m$.  By Proposition~\ref{prop:lambda}, this implies that $\phi\arrowvert_{I_{f_m}} \in \Aut (I_{f_m})$.  Now, by the uniqueness of extensions proved in Lemma~\ref{lem:HW.inj}, we obtain that $\iota_m (\phi\arrowvert_{I_{f_m}}) = \phi$.

To finish the proof, we will show that, if $\phi \in \iota_m \left( \Aut(I_{f_m}) \right)$ for all $m > 0$, then $\phi \arrowvert_{I_f} \in \Aut (I_f)$.  Let $\alpha \in R^\times$ and $\beta \in R$ be such that $\phi(t) = \alpha t + \beta$.  By the uniqueness of extensions proved in Lemma~\ref{lem:HW.inj}, we have that $\phi \arrowvert_{I_{f_m}} \in \Aut (I_{f_m})$ for all $m > 0$.  Notice that, if $Z_m (f) = \emptyset$, then $f_m = 1$ and $\phi (f_m) = \alpha^{\deg (f_m)} f_m$.  If $Z_m (f) \ne \emptyset$, then $f_m$ is a nonconstant monic polynomial, and by Proposition~\ref{prop:lambda}, $\phi(f_m)= \alpha^{\deg(f_m)} f_m$.   Thus, we have:
\[
\phi(f)
=\phi \left( \prod_{m>0} f_m^m \right)
=\prod_{m>0} \phi(f_m)^m
=\prod_{m>0} \alpha^{m \deg(f_m)}f_m^m
=\alpha^{\sum m \deg(f_m)} \prod_{m>0} f_m^m
=\alpha^{\deg(f)} f.
\]
Hence, by Proposition~\ref{prop:lambda}, $\phi \arrowvert_{I_f} \in \Aut(I_f)$.
\end{proof}

Together with Theorem~\ref{thm:main}, Theorem~\ref{thm:main2} gives a very precise description of the group $\Aut (I_f)$.  In fact, if $f$ is a nonconstant monic polynomial with at least two distinct roots in $\bb K$, then by Theorem~\ref{thm:main}, $\Aut(I_f)$ is a finite, cyclic (thus abelian) group.  We finish this section by using Theorem~\ref{thm:main2} to pin down the order of $\Aut(I_f)$.

\begin{corollary}
Let $f$ be a nonconstant monic polynomial in $R[t]$ and, for each $m > 0$, let $d_m$ denote the order of the group $\Aut(I_{f_m})$.  Then $\Aut(I_f)$ is isomorphic to a cyclic group of order $\gcd(\{d_m \colon Z_m \neq \emptyset\})$.  In particular, if $\gcd(\{d_m \colon Z_m \neq \emptyset\})=1$, then $\Aut(I_f)$ is trivial. \qed
\end{corollary}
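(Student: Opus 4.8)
The plan is to deduce this corollary directly from Theorem~\ref{thm:main2} together with the structure of finite cyclic groups. By Theorem~\ref{thm:main2}, $\iota_f$ identifies $\Aut(I_f)$ with $\bigcap_{m>0} \iota_m\left(\Aut(I_{f_m})\right)$, an intersection of subgroups of $\Aut(R[t])$. The first observation is that every factor in this intersection for which $Z_m(f) = \emptyset$ contributes the whole group $\Aut(R[t])$ (since $I_{f_m} = R[t]$ and $\iota_m$ is the identity in that case), so it may be dropped; hence $\Aut(I_f) \cong \bigcap_{m>0,\, Z_m(f) \ne \emptyset} \iota_m\left(\Aut(I_{f_m})\right)$. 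For each such $m$, the polynomial $f_m$ has at least one root, and if $Z(f)$ has at least two distinct elements it is automatic that at least one of the $f_m$ has a root, so $\Aut(I_f)$ is contained in a finite group and is itself finite.

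First I would dispose of the case where $f$ has only one root in $\bb K$: then $f = f_m^m$ with $f_m = t - a$ linear for a single $m$, so $\Aut(I_f) \cong \Aut(I_{f_m})$ and $\gcd(\{d_m\}) = d_m$ is precisely the order of $\Aut(I_{f_m})$, and by Proposition~\ref{prop:main0} both are $R^\times$; the statement reads correctly once one interprets ``cyclic group of order $|R^\times|$'' loosely (or one simply notes this degenerate case and moves on to the main case). In the main case $f$ has at least two distinct roots, so by Theorem~\ref{thm:main} and the remarks following it each $\Aut(I_{f_m})$ with $Z_m(f) \ne \emptyset$ is a finite cyclic group, of order $d_m$, and moreover each is realized via $\iota_m$ as the cyclic subgroup of $\Aut(R[t])$ consisting of automorphisms $t \mapsto \alpha t + \beta$ with $\alpha$ ranging over a group of roots of unity — after the appropriate shift, the subgroup of maps $t \mapsto \alpha t$ with $\alpha^{d_m} = 1$.

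The key step is then purely group-theoretic: the intersection of finitely many cyclic subgroups of a common cyclic overgroup, where the $i$-th subgroup is the unique one of order $d_i$, is the unique cyclic subgroup of order $\gcd(\{d_i\})$. Concretely, inside the group $\mu_\infty \subset \bb K^\times$ of roots of unity, the subgroup of $d$-th roots of unity is $\mu_d$, and $\bigcap_i \mu_{d_i} = \mu_{\gcd(d_i)}$; transporting this through the identification of $\Aut(I_{f_m})$ with $\{\,t\mapsto \alpha t : \alpha \in \mu_{d_m}\,\}$ (all centered at the same point $z_0$, by Theorem~\ref{thm:main} applied to $f$ — note the center of rotation is the multiplicity-weighted average of \emph{all} roots of $f$, which is common to all the $f_m$ only after we recenter, so one must be a little careful that the $\iota_m$-images share a common fixed point) gives $\Aut(I_f) \cong \mu_{\gcd(\{d_m : Z_m(f)\ne\emptyset\})}$, a cyclic group of the asserted order.

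The main obstacle I anticipate is precisely this last subtlety: Theorem~\ref{thm:main2} expresses $\Aut(I_f)$ as an intersection of the $\iota_m\left(\Aut(I_{f_m})\right)$ inside $\Aut(R[t])$, but the isomorphism of $\Aut(I_{f_m})$ with a group of roots of unity furnished by Theorem~\ref{thm:main} involves recentering at the weighted average of the roots of $f_m$, which differs from the weighted average of the roots of $f$. One must check that, as subgroups of $\Aut(R[t])$, the images $\iota_m\left(\Aut(I_{f_m})\right)$ in fact all consist of automorphisms fixing the \emph{same} point (namely the weighted centroid $z_0$ of $f$), so that they are genuinely cyclic subgroups of the single cyclic ``rotation-about-$z_0$'' group and the $\mu_d$-intersection lemma applies. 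This follows by re-running the coefficient comparison of Lemma~\ref{lem:beta=0} for each $f_m$, or more cleanly by first applying Proposition~\ref{prop:aut.shift} to translate $z_0$ to the origin and then observing that an automorphism $t \mapsto \alpha t + \beta$ preserving $I_{f_m}$ for every $m$ simultaneously forces $\beta = 0$ by Lemma~\ref{lem:beta=0} applied to the (at least one) $f_m$ with a nonzero root; once all the relevant subgroups are honestly diagonal rotations about the origin, the gcd computation is immediate and the proof concludes.
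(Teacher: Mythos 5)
You correctly isolated the one real issue, but the step you flagged as ``the main obstacle'' is not a subtlety that can be checked away --- it is where the argument breaks, and in fact where the statement itself breaks. Running the coefficient comparison of Lemma~\ref{lem:beta=0} for $f_m$ shows that a nonidentity element of $\iota_m\left(\Aut(I_{f_m})\right)$ is an affine map $t\mapsto\alpha t+\beta$ with $\deg(f_m)\,\beta=(1-\alpha)\sum_{a\in Z_m(f)}a$, i.e.\ one fixing the centroid of the roots of $f_m$, \emph{not} the weighted centroid $z_0$ of $f$; these centroids are genuinely different points for different $m$, and $z_0$ is only their weighted average. So after recentering $f$ at the origin, the subgroups $\iota_m\left(\Aut(I_{f_m})\right)$ are rotation groups about distinct centers, and two rotation groups about distinct centers meet only in the identity; the $\mu_d$-intersection lemma never applies. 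Your fallback (``an automorphism preserving every $I_{f_m}$ forces $\beta=0$'') only shows the intersection is \emph{contained in} the rotations about $z_0$; it does not show that each $\iota_m\left(\Aut(I_{f_m})\right)$ contains the full group of such rotations of order $d_m$, which is what the gcd count requires.

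Concretely, take $R=\mathbb{Q}$ and $f(t)=t(t-1)(t-2)^2(t-3)^2$. Then $f_1=t(t-1)$ and $f_2=(t-2)(t-3)$ each have automorphism group of order $2$, generated by the restrictions of $t\mapsto -t+1$ and $t\mapsto -t+5$ respectively, so $\gcd(d_1,d_2)=2$; but the only map $z\mapsto\frac{1}{\alpha}(z-\beta)$ preserving both $\{0,1\}$ and $\{2,3\}$ is the identity, so $\Aut(I_f)$ is trivial by Lemma~\ref{lem:iso=bij} (equivalently, $\iota_1(\Aut(I_{f_1}))\cap\iota_2(\Aut(I_{f_2}))=\{\mathrm{id}\}$ in Theorem~\ref{thm:main2}). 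The corollary is therefore false as stated --- note the paper offers no proof of it. What Theorem~\ref{thm:main2} actually yields is the divisibility $|\Aut(I_f)|$ divides $\gcd(\{d_m : Z_m\neq\emptyset\})$, since restriction embeds $\Aut(I_f)$ into each $\Aut(I_{f_m})$; equality holds precisely when the subgroups $\iota_m\left(\Aut(I_{f_m})\right)$ share a common fixed point. The ``in particular'' clause (gcd equal to $1$ forces triviality) survives, but the equality you set out to prove cannot be recovered without an added hypothesis of that kind. Your separate handling of the one-root case is harmless, though there $d_m$ need not even be finite.
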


\section{Open problems} \label{S:open.problems}

\subsection{Isomorphisms of ideals of polynomial rings}

Let $n, k, \ell > 0$ and $f_1, \dotsc, f_k, g_1, \dotsc, g_\ell$ be polynomials in $R [x_1, \dotsc, x_n]$.  Find conditions on $f_1, \dotsc, f_k, g_1, \dotsc, g_\ell$ under which the ideals $\langle f_1, \dotsc, f_k \rangle$ and $\langle g_1, \dotsc, g_\ell \rangle$ are isomorphic as associative commutative algebras without unit.  Notice that, for $n=1$, this problem is solved in Corollary~\ref{cor:If=Ig} and Proposition~\ref{prop:lambda}.

\subsection{Varieties determined by one variable polynomials}

Let $f \in R[t]$ be a nonconstant monic polynomial and let $n>0$ denote $\deg(f)$. Notice that $I_f$ and $A_f$ are finitely generated $R$-algebras. The former is generated by $\{f, tf, \dotsc, t^{n-1}f\}$ and the latter is generated by $\{1, f, tf, \dotsc, t^{n-1}f\}$.  Denote by $\mathfrak p$ the maximal ideal of $R [x_1, \dotsc, x_n]$ generated by $x_1, \dotsc, x_n$, and define the following homomorphisms of $R$-algebras:
\[\begin{array}{lcr}
\begin{array}{cccc}
\theta_f \colon & R [x_1, \dotsc, x_n] & \longrightarrow &  A_f\\
       &        x_i          & \longmapsto     & t^{i-1}f\\
       &         1           & \longmapsto     & 1     \\
\end{array}
& \text{ and } &
\begin{array}{cccc}
\theta_f^\circ \colon & \mathfrak p & \longrightarrow &  I_f\\
&        x_i          & \longmapsto     & t^{i-1}f\\
\end{array}\\
\end{array}  \]
Since these homomorphisms are surjective,
\[
\begin{array}{lcr}
A_f \cong \dfrac{R[x_1, \dotsc, x_n]}{\ker \theta_f}
& \text{ and } &
I_f \cong \dfrac{\mathfrak p}{\ker \theta_f^\circ}.
\end{array}
\]

The ideal $\ker \theta_f$ in $R [x_1, \dotsc, x_n]$ corresponds to an affine variety $\mathcal{V} (\ker \theta_f)$ in $\bb A^n$, whose coordinate ring is $A_f$.  We denote $\mathcal V(\ker \theta_f)$ by $\mathcal V_f$ and call it \emph{the variety determined by $f$}.

In the set of all varieties determined by one variable polynomials, one can consider their isomorphism classes as affine varieties.  It is well known that two affine varieties are isomorphic if and only if their coordinate rings are isomorphic.  Since any isomorphism between $I_f$ and $I_g$ can be extended to an isomorphism between $A_f$ and $A_g$ (see, for instance, the first paragraph of the proof of Lemma~\ref{lem:HW.inj}), we have the following result:

\begin{proposition} \label{isom:var}
Let $f$ and $g$ be nonconstant monic polynomials in $R[t]$.  If $I_f$ is isomorphic to $I_g$, then $\mathcal{V}_f$ is isomorphic to $\mathcal{V}_g$.
\end{proposition}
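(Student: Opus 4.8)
The plan is to reduce the statement to the well-known fact that two affine varieties are isomorphic if and only if their coordinate rings are isomorphic as $R$-algebras (with unit). Since $\mathcal V_f$ has coordinate ring $A_f$ and $\mathcal V_g$ has coordinate ring $A_g$, it suffices to show that an $R$-algebra isomorphism $I_f \to I_g$ (of algebras without unit) induces an $R$-algebra isomorphism $A_f \to A_g$ (of algebras with unit). This is exactly the content of the first step in the proof of Lemma~\ref{lem:HW.inj}, which I would simply invoke: given an isomorphism $\varphi \colon I_f \to I_g$, the map $\tilde\varphi \colon A_f \to A_g$ defined by $\tilde\varphi(\lambda + h) = \lambda + \varphi(h)$ for $\lambda \in R$, $h \in I_f$, is a well-defined $R$-algebra homomorphism (using that $A_f = R \oplus I_f$ and $A_g = R \oplus I_g$ as $R$-modules, and that the products are determined by the products on $I_f$ and $I_g$ together with the $R$-action). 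Applying the same construction to $\varphi^{-1}$ produces a two-sided inverse $\widetilde{\varphi^{-1}}$, so $\tilde\varphi$ is an isomorphism.

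The remaining step is to connect this to the geometry. First I would note that $A_f \cong R[x_1,\dotsc,x_n]/\ker\theta_f$ and $A_g \cong R[x_1,\dotsc,x_m]/\ker\theta_g$ (where $n = \deg f$, $m = \deg g$), so that $A_f$ and $A_g$ are the coordinate rings of $\mathcal V_f$ and $\mathcal V_g$ respectively, by definition. Combining this with the isomorphism $A_f \cong A_g$ obtained above and the standard equivalence between the category of affine varieties and the category of (finitely generated, reduced) $R$-algebras, we conclude $\mathcal V_f \cong \mathcal V_g$.

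The only genuinely delicate point is the hypotheses under which "isomorphic coordinate rings $\Leftrightarrow$ isomorphic varieties" holds: over a general commutative integral domain $R$ one usually states this for affine $R$-schemes, or one works over an algebraically closed field, in which case one should be slightly careful about reducedness of $A_f$. Since $A_f$ is a subring of $R[t]$, hence a domain (as already observed in the proof of Lemma~\ref{lem:HW.inj}), it is in particular reduced, so this causes no trouble; I would record this observation explicitly. Apart from that, the argument is entirely formal, so I do not expect a real obstacle — the proof will be short, essentially a one-line reduction to the first paragraph of the proof of Lemma~\ref{lem:HW.inj} together with a citation for the dictionary between varieties and coordinate rings.
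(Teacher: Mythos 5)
Your proposal is correct and follows exactly the paper's (very brief) argument: extend the isomorphism $I_f \to I_g$ to an isomorphism $A_f \to A_g$ via the unital adjunction from the first paragraph of the proof of Lemma~\ref{lem:HW.inj}, then invoke the equivalence between affine varieties and their coordinate rings. Your extra remarks on reducedness and on applying the construction to $\varphi^{-1}$ to get a two-sided inverse only make explicit what the paper leaves implicit.
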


One can consider the following two questions.  Is the converse of Proposition~\ref{isom:var} true?  For each fixed $n>0$, are there finitely many isomorphism classes of affine varieties determined by one variable polynomials?

\subsection*{Acknowledgments}

The first author would like to thank CNPq and the second author would like to thank CNPq and Fapesp for their financial supports.  The first author would also like to thank D. Nakano for bringing up the problem that motivated this paper, and both authors would like to thank J. Hahn and E. Wofsey for their important suggestions \cite{HW}.


\end{document}